\newcommand*{\N}{\mathbb{N}}
\newcommand*{\C}{\mathbb{C}}
\newcommand*{\prob}{\mathbb{P}}
\newcommand*{\eps}{\varepsilon}
\newtheorem{theorem}{Theorem}
\newtheorem{lemma}{Lemma}
\theoremstyle{definition}
\newtheorem{remark}{Remark}
\newtheorem{example}{Example}
\newtheorem{question}{Question}
\title{Absence of zeros implies strong spatial mixing}
\author{Guus Regts\thanks{Korteweg de Vries Institute for Mathematics, University of Amsterdam, the Netherlands. Email: \texttt{guusregts@gmail.com}. Supported by NWO Vidi grant VI.Vidi.193.068}}
\begin{document}

\maketitle
\begin{abstract}
In this paper we show that absence of complex zeros of the partition function of the hard-core model on any family of bounded degree graphs that is closed under taking induced subgraphs implies that the associated probability measure, the \emph{hard-core measure}, satisfies strong spatial mixing on that family. 
As a corollary we obtain that the hard-core measure on the family of bounded degree claw-free graphs satisfies strong spatial mixing for every value of the fugacity parameter. 
We furthermore derive strong spatial mixing for graph homomorphism measures from absence of zeros of the graph homomorphism partition function.
\end{abstract}
\section{Introduction}
The design of efficient algorithms to (approximately) compute evaluations of partition functions and graph polynomials, such as the matching polynomial, the independence polynomial, the number of proper colorings and more generally the partition function of the Potts model, is an active area of research.
There are two main approaches to obtain deterministic algorithms for this task. 
One is based on a notion of decay of correlations, related to the absence of phase transitions in statistical physics, called \emph{strong spatial mixing}. 
This method was pioneered by Weitz~\cite{Weitz06} and Bandyopadhyay and Gamarnik~\cite{BandyopadhyayGamarnik} and dates back about fifteen years. 
The other is the \emph{interpolation method} of Barvinok~\cite{BarvinokBook} in combination with an algorithm of Patel and the author~\cite{PatelRegts17}, which is based on absence of complex zeros of the partition function and relates to absence of phase transitions in the Lee-Yang~\cite{LeeYang1} sense.

Let us for concreteness give an example to illustrate some of these notions.
\begin{example}[The hard-core model]\label{ex:ind pol}
Let $G=(V,E)$ be a graph and $\lambda\in \C$. The \emph{independence polynomial} of $G$ evaluated at $\lambda$ is given by    \begin{equation}\label{eq:def ind pol}
    Z_G(\lambda)=\sum_{\substack{I\subseteq V\\I \text{ independent}}}\lambda^{|I|}.
\end{equation}
where a set $I\subseteq V$ is called independent if it does not span any edge of $G$.
In statistical physics $Z_G(\lambda)$ is known as the \emph{partition function of the hard-core model} and $\lambda$ is called the \emph{fugacity}.
For positive $\lambda$ there is a natural associated probability measure, $\mu_{G,\lambda}$, on the collection of all independent sets of $G$, which is called the \emph{hard-core measure} and is defined by
\[
\mu_{G,\lambda}(I)=\frac{\lambda^{|I|}}{Z_G(\lambda)},
\]
for an independent set $I$. Often we just write $\mu$ instead of $\mu_{G,\lambda}$.

Let for a positive integer $\Delta$, $\mathcal{G}_\Delta$ be the family of graphs of maximum degree at most $\Delta$.
If for any $G\in \mathcal{G}_\Delta$ and any two vertices $u,v\in V(G)$, 
\[
\left|\Pr_{\mu}[u,v\in I]-\Pr_{\mu}[u\in I]\Pr_{\mu}[v\in I]\right|< \delta(d_G(u,v)),
\]
where $\delta:\mathbb{N}\to [0,\infty)$ is a function that goes to $0$ as its input goes to infinity and $d_G(u,v)$ denotes the graph distance between the vertices $u$ and $v$ in $G$, then we say that $\mu_{G,\lambda}$ satisfies \emph{(point to point) decay of correlations with rate $\delta$} on $\mathcal{G}_\Delta$. 

Weitz~\cite{Weitz06} showed that for $\lambda\in (0,\lambda_c)$, where $\lambda_c=\frac{(\Delta-1)^{\Delta-1}}{(\Delta-2)^\Delta}$, $\mu_{G,\lambda}$ satisfies a stronger form of decay of correlation called \emph{strong spatial mixing}, which we will formally define below, and used this to device a deterministic polynomial time approximation algorithm for computing $Z_G(\lambda)$ for $G\in \mathcal{G}_\Delta$ and $\lambda\in (0,\lambda_c)$\footnote{Technically, strong spatial mixing does not automatically yield efficient algorithms. Weitz~\cite{Weitz06} in fact proved something slightly stronger than strong spatial mixing.}. 

Peters and the author~\cite{PetersRegts19} showed that there exists an open set $U\subset \C$ containing the interval $(0,\lambda_c)$ such that for all $\lambda\in U$ and $G\in \mathcal{G}_\Delta$, $Z_G(\lambda)\neq 0$. Combined with Barvinok's interpolation method~\cite{BarvinokBook,PatelRegts17} this also yields a deterministic polynomial time approximation algorithm for computing $Z_G(\lambda)$ for $G\in \mathcal{G}_\Delta$ and $\lambda\in (0,\lambda_c)$.
\end{example}

Despite the difference in the respective approaches, surprisingly both approaches have given comparable results in many situations. Not just for the independence polynomial as mentioned in the example above, but also for the matching polynomial~\cite{Bayatietalmatchings,PatelRegts17}, the edge cover polynomial~\cite{edge,weighted,bencs2020some} and the graph homomorphism partition function~\cite{lu2013improved,BarvinokSoberon,BarvinokBook}. This begs the question of how these two approaches are related.

Many of the above mentioned polynomials and partition functions originate in statistical physics, where they are typically studied on structured subgraphs of lattices such as $\mathbb{Z}^d$. 
Dobrushin and Shlossmann~\cite{DS84,DS87} came up with an extensive list of equivalent characterization of what they call completely analytical interactions, in particular showing that absence of zeros and (some forms of) decay of correlations are equivalent for models like the hard-core model and many others.
Their proof depends strongly on the fact that balls of radius $r$ in the graph $\mathbb{Z}^d$ (for a fixed $d$) grow only polynomially with $r$. 
This is of course not true in general bounded degree graphs. So the work of Dobrushin and Shlossmann only gives a suggestion of what could be true for other families of graphs.

Understanding the connection between strong spatial mixing and absence of zeros on families of graphs like $\mathcal{G}_\Delta$ has recently started to receive  attention~\cite{LSSFisherzeros,LSScorrelation,shao2019contraction,gamarnik2020correlation}. In particular in~\cite{LSScorrelation,shao2019contraction} it is shown that a standard method for proving strong spatial mixing can be used to prove absence of zeros for partition functions of several models.
Very recently, Gamarnik~\cite{gamarnik2020correlation} showed that absence of zeros of the partition function implies a weaker form of strong spatial mixing for the hardcore model and certain graph homomorphism models, but his result does not apply to all bounded degree graphs.

In the present paper we will show that absence of zeros of the partition function does indeed imply strong spatial mixing for the hardcore model and certain graph homomorphism models for all bounded degree graphs, confirming in a strong form a variant of a conjecture of Gamarnik~\cite{gamarnik2020correlation}.

Below we shall give formal definitions of the notion of strong spatial mixing that we use and state our main results.
For concreteness we will limit ourselves to two types of models: the hard-core model and graph homomorphisms. 
We shall later indicate how our approach can be used for other models as well.
\subsection{The hard-core model}
We continue our discussion of Example~\ref{ex:ind pol}.
To introduce the notion of strong spatial mixing we need to consider \emph{boundary conditions}. 
For a graph $G=(V,E)$ and $\Lambda\subset V$ we call $\sigma:\Lambda\to \{0,1\}$ a \emph{boundary condition} if $\sigma^{-1}(1)$ is an independent set in the graph induced by $\Lambda$.
We denote by $\Pr_\mu[v\in I\mid \sigma]$ the probability that the vertex $v$ is in the random independent set $I$ drawn according to the hard-core measure conditioned on $\sigma$, meaning that we condition on the event that $\sigma^{-1}(1)\subset I$ and $\sigma^{-1}(0)\cap I=\emptyset$.
For another boundary condition $\tau$ on $\Lambda$ and a vertex $v\notin \Lambda$, we denote by $d_G(v,\sigma\neq \tau)$ the graph distance from $v$ to the nearest vertex in $\Lambda$ at which $\sigma$ and $\tau$ differ.

Let $\mathcal{G}$ be an infinite family of graphs and let $\lambda>0$.
The hard-core measure at $\lambda$ satisfies \emph{strong spatial mixing on $\mathcal{G}$} with exponential rate $r>1$ if there exists a constant $C>0$ such that for any graph $G=(V,E)\in \mathcal{G}$, any vertex $v\in V$, any $\Lambda\subseteq V\setminus \{v\}$ and any two boundary conditions $\sigma$ and $\tau$ on $\Lambda$,
\begin{equation}\label{eq:def SSM ind}
\left|\Pr_\mu[v\in I\mid \sigma]-\Pr_\mu[v\in I\mid \tau]\right|\leq Cr^{-d_G(v,\sigma\neq \tau)}.
\end{equation}
Note that strong spatial mixing implies point to point correlation with an exponentially decaying rate.

For a set $S\subset\C$ and $\eps>0$ we denote $\mathcal{N}(S,\eps)=\{z\in \C\mid d(z,S)\leq \eps\}$, where $d$ denotes the Euclidean metric on $\C$.
We can now state our main result for the hard-core measure, which we prove in Section~\ref{sec:ind}.
\begin{theorem}\label{thm:main general}
Let $\Delta\geq 2$ be an integer and let $\mathcal{G}\subset\mathcal{G}_\Delta$ be a family of bounded degree graphs that is closed under taking induced subgraphs.
Let $\lambda^\star>0$ be such that there exists $\eps>0$ such that for each $G\in \mathcal{G}$ and any $\lambda\in \mathcal{N}([0,\lambda^\star),\eps)$, $Z_G(\lambda)\neq 0$. 
Then for any $\lambda\in (0,\lambda^\star]$ the hard-core measure at $\lambda$ satisfies strong spatial mixing on $\mathcal{G}$ with exponential rate $r=1+\exp(-O(\lambda/\eps))$.
\end{theorem}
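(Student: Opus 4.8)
The plan is to turn absence of zeros into strong spatial mixing via a quantitative Montel-type argument applied to the ratio $\Pr_\mu[v\in \mathrm I\mid\sigma]$ viewed as a function of $\lambda$. The key observation is that conditioning on a boundary condition $\sigma$ on $\Lambda$ is itself a hard-core computation on an induced subgraph: pinning vertices to $1$ deletes them and their neighbours, pinning vertices to $0$ deletes them, so the conditional probability can be written as $\lambda Z_{G''}(\lambda)/Z_{G'}(\lambda)$ where $G'$ and $G''$ are induced subgraphs of $G$ (here $G''=G'-N[v]$). Since $\mathcal G$ is closed under induced subgraphs, the hypothesis gives $Z_{G'}(\lambda)\neq 0$ on $\mathcal N([0,\lambda^\star),\eps)$ for all the relevant graphs, so the function
\[
f_{G,\sigma}(\lambda):=\Pr_\mu[v\in\mathrm I\mid\sigma]=\frac{\lambda Z_{G-N[v]}(\lambda)}{Z_{G-(\Lambda\cup\{v\})\cup\text{(neighbours pinned to }1)}(\lambda)}
\]
is analytic and nonvanishing-denominator on a fixed complex neighbourhood of $[0,\lambda^\star]$, and takes values in a bounded region (probabilities, so in $[0,1]$ on the real segment, and one gets an a priori bound on a complex neighbourhood by a compactness/normal-families argument or a crude direct estimate).

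First I would fix $\sigma,\tau$ on $\Lambda$ and set $k=d_G(v,\sigma\neq\tau)$. The heart of the argument is that $f_{G,\sigma}$ and $f_{G,\tau}$ agree to high order: if we change the boundary condition only at vertices at distance $\geq k$ from $v$, then the Taylor coefficients of $f_{G,\sigma}$ and $f_{G,\tau}$ around $\lambda=0$ coincide up to degree roughly $k$. This is the standard ``the coefficient of $\lambda^j$ in $Z_G$ only sees independent sets of size $j$, hence a ball of radius $j$'' phenomenon, made quantitative: the coefficient of $\lambda^j$ in $\Pr_\mu[v\in\mathrm I\mid\sigma]$ depends only on the induced subgraph on the ball $B_G(v,j)$ together with the restriction of $\sigma$ to that ball, so for $j<k$ it is insensitive to whether we use $\sigma$ or $\tau$. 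Therefore $g:=f_{G,\sigma}-f_{G,\tau}$ is analytic on the complex neighbourhood, bounded there by an absolute constant $M$ (twice the a priori bound), and vanishes to order $\geq k$ at the origin.

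Then I would apply the Schwarz-lemma / Cauchy-estimate bound: an analytic function on a disk of radius $R$ around $0$, bounded by $M$ and vanishing to order $k$ at $0$, satisfies $|g(\lambda)|\leq M(|\lambda|/R)^k$ for $|\lambda|\le R$. Since $[0,\lambda^\star]$ together with an $\eps$-neighbourhood sits inside a disk of radius $R=\lambda^\star+\eps$ centred appropriately (or, better, one uses a disk of radius $\lambda+\eps$ centred at $0$ so that the evaluation point $\lambda\le\lambda^\star$ lies well inside), this gives
\[
\bigl|\Pr_\mu[v\in\mathrm I\mid\sigma]-\Pr_\mu[v\in\mathrm I\mid\tau]\bigr|\le M\left(\frac{\lambda}{\lambda+\eps}\right)^{k}=M\,r^{-k},\qquad r=\frac{\lambda+\eps}{\lambda}=1+\frac{\eps}{\lambda},
\]
and $\tfrac{\lambda+\eps}{\lambda}\ge 1+\Omega(\eps/\lambda)$ translates into the stated rate $r=1+O(\exp(-\lambda/\eps))$ after the usual bookkeeping (one may also need to shrink $\eps$ slightly and absorb constants into $C$; using $\log r\ge$ some positive constant times $\min(1,\eps/\lambda)$ yields a bound of the advertised shape). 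The constant $C$ absorbs $M$, which is where the a priori bound on $|f_{G,\sigma}|$ on the complex neighbourhood is used.

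The main obstacle I anticipate is precisely obtaining a \emph{uniform} a priori bound $M$ on $|f_{G,\sigma}(\lambda)|$ over all $G\in\mathcal G$, all boundary conditions, and all $\lambda$ in the fixed complex $\eps/2$-neighbourhood of $[0,\lambda^\star]$ — on the real axis it is trivially in $[0,1]$, but off the real axis $Z_{G'}(\lambda)$ could in principle be small. This is handled by the hypothesis plus a compactness argument in the spirit of the Patel--Regts / Barvinok approach: zero-freeness of $Z_{G'}$ on the closed region $\mathcal N([0,\lambda^\star),\eps)$, uniformly over the (infinite but ``locally finite'') family of induced subgraphs, forces $|Z_{G'}(\lambda)|$ to be bounded below relative to $|Z_{G'}(\lambda')|$ for $\lambda'$ real via a Vitali/normal-families argument on the normalized polynomials, equivalently via a ratio-of-consecutive-coefficients bound; one then gets that $\log Z_{G'}$ is $O(|V(G')|)$ with controlled derivatives, and the ratio $f_{G,\sigma}$ stays bounded by a constant depending only on $\Delta,\lambda^\star,\eps$. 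Making this last point clean — extracting a single constant $M$ from zero-freeness on an infinite family — is the technical crux, and it is exactly the place where the closure of $\mathcal G$ under induced subgraphs is essential.
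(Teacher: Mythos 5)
Your overall architecture matches the paper's (conditional probabilities as ratios $\lambda Z_{G[\sigma]\setminus N[v]}/Z_{G[\sigma]}$ on induced subgraphs, agreement of Taylor coefficients up to order $d_G(v,\sigma\neq\tau)$, a Cauchy-type estimate, and a uniform a priori bound extracted from zero-freeness), but the central analytic step is invalid as written. You apply the Schwarz/Cauchy bound on the disk $\mathbb{D}_{\lambda+\eps}$ centred at $0$, which requires $f_{G,\sigma}-f_{G,\tau}$ to be analytic and bounded by $M$ on that \emph{entire disk}. The hypothesis only gives zero-freeness of $Z_{G'}$ on the thin neighbourhood $\mathcal{N}([0,\lambda^\star),\eps)$ of a segment; the disk $\mathbb{D}_{\lambda+\eps}$ contains large regions (e.g.\ the negative reals near $-1/(e\Delta)$, where zeros of independence polynomials of bounded-degree graphs genuinely accumulate) where the ratio can have poles. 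This is exactly why the theorem's rate is the weak $r=1+O(e^{-\lambda/\eps})$ rather than your $r=1+\eps/\lambda$: the correct move is to pull back to a disk via a conformal map of $\mathbb{D}_r$ \emph{into} $\mathcal{N}([0,1],2\eps/\lambda)$ with $g(0)=0$, $g(1)=1$ (the paper uses Barvinok's $g(z)=\eps'\log(1/(1-\alpha z))$), note that composing with $g$ preserves agreement of the first $N$ Taylor coefficients, and only then apply Cauchy estimates; the largest admissible $r$ for such a map is $1+O(e^{-\lambda/\eps})$. Your remark that $1+\eps/\lambda$ ``translates into'' $1+O(e^{-\lambda/\eps})$ after bookkeeping conceals the fact that the stronger rate was never legitimately derived.

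Two further points need real arguments rather than the gestures you give. First, the locality of the Taylor coefficients of the \emph{ratio} does not follow from ``the coefficient of $\lambda^j$ in $Z_G$ sees independent sets of size $j$'': those independent sets can lie anywhere in $G$, and the coefficients of a quotient mix all coefficients of numerator and denominator. One needs the cluster expansion (or an equivalent log-derivative/telescoping argument): writing $P_{G,v}=w_v\partial_{w_v}\log Z_G(w)$ and using that the Ursell function vanishes on disconnected clusters shows the coefficient of $\lambda^k$ depends only on $B_G(v,k-1)$ and $\sigma$ restricted there. Second, the uniform bound $M$, which you rightly call the crux, is obtained in the paper not by lower-bounding $|Z_{G'}|$ via normal families of ``normalized polynomials'' (the polynomials themselves do not form a normal family in any useful sense as $|V|\to\infty$), but by applying Montel's theorem directly to the family of ratios: the identity $Z_G=\lambda Z_{G\setminus N[v]}+Z_{G-v}$ together with closure under induced subgraphs shows each ratio omits the three values $0,1,\infty$ on the zero-free region, so the family is normal, and since the ratios lie in $[0,1]$ on the positive reals no subsequential limit is $\equiv\infty$; boundedness on compacta follows. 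One also needs the Shearer/Scott--Sokal bound to control the ratios near $\lambda=0$, where the compact set must be allowed to touch the origin. Without these ingredients the proof does not close.
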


A celebrated result by Chudnovsky and Seymour~\cite{ChudnovskySeymour07} states that for any claw-free graph $G$ the zeros of $Z_G$ are all real and hence negative. Since for bounded degree graphs the zeros of its independence polynomial do not approach $0$ by a result of Shearer~\cite{shearer} and Scott and Sokal~\cite{ScottSokal05} (cf. Lemma~\ref{lem:bounded ratios near zero} below), we can apply our main theorem to the family of bounded degree claw-free graphs (which is certainly closed under taking induced subgraphs) to obtain an improvement on a result implicit in~\cite{Bayatietalmatchings}.
We can however get a much better exponential rate as the following result states.
\begin{theorem}\label{thm:main claw}
Let $\Delta\geq 2$ be an integer and let $\mathcal{G}\subset\mathcal{G}_\Delta$ be the family of claw-free graphs of maximum degree at most $\Delta$. Then for any $\lambda>0$, the hard-core measure at $\lambda$ satisfies strong spatial mixing on $\mathcal{G}$ with exponential rate $r=1+O((\lambda\Delta)^{-1/2})$.
\end{theorem}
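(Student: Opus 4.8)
The plan is to deduce Theorem~\ref{thm:main claw} from Theorem~\ref{thm:main general} by producing, for every fixed $\lambda>0$, a large $\eps>0$ for which the hypothesis of Theorem~\ref{thm:main general} holds on the family of claw-free graphs of maximum degree at most $\Delta$. By Chudnovsky and Seymour~\cite{ChudnovskySeymour07}, the independence polynomial $Z_G$ of a claw-free graph $G$ has only real (hence negative) zeros. Therefore $Z_G(z)\neq 0$ for every $z$ in the open right half-plane $\{z\in\C:\Re(z)>0\}$, and more to the point, $Z_G(z)\neq 0$ whenever $\Re(z)>-\rho_G$, where $-\rho_G$ is the largest (closest to $0$) real zero of $Z_G$. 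The first step is thus to control $\rho_G$ from below uniformly over the family: by the Shearer--Scott--Sokal bound (Lemma~\ref{lem:bounded ratios near zero}), for $G\in\mathcal{G}_\Delta$ no zero of $Z_G$ lies within distance roughly $\tfrac{(\Delta-1)^{\Delta-1}}{\Delta^\Delta}\sim \tfrac{1}{e\Delta}$ of the origin; in particular $\rho_G\geq c/\Delta$ for an absolute constant $c$, uniformly over claw-free $G\in\mathcal{G}_\Delta$.

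The second step is a geometric observation: since all zeros of $Z_G$ are real and none lies in $(-\rho_G,\infty)$, the set $\mathcal{N}([0,\lambda],\eps)$ is zero-free as soon as $\eps<\rho_G$ — the nearest possible zero to the segment $[0,\lambda]$ is the real point $-\rho_G$, at distance exactly $\rho_G$. Hence we may take any $\eps$ with $\eps<c/\Delta$, say $\eps=c/(2\Delta)$, and conclude via Theorem~\ref{thm:main general} that the hard-core measure at $\lambda$ satisfies strong spatial mixing on $\mathcal{G}$ with exponential rate
\[
r=1+O\!\left(\exp(-\lambda/\eps)\right)=1+O\!\left(\exp(-2\lambda\Delta/c)\right).
\]
This already gives an exponentially small (in $\lambda\Delta$) deviation of $r$ from $1$, but the target rate in the statement is $1+O((\lambda e\Delta)^{-1/2})$, which is a weaker — i.e. larger — error term; so at first sight the bound from Theorem~\ref{thm:main general} is more than enough. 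The subtlety, and the reason the statement is phrased with the $(\lambda e\Delta)^{-1/2}$ rate, is that we want a bound that is meaningful (i.e.\ $r>1$) already for $\lambda$ bounded away from $0$ but small relative to $1/\Delta$, where $\exp(-\lambda/\eps)$ is close to $1$ and the implied constant matters. I would therefore revisit the proof of Theorem~\ref{thm:main general} and track how $r$ depends on the zero-free region, extracting the sharper quantitative dependence; the key input is again that for claw-free graphs the zero-free region is a full half-plane $\Re(z)>-\rho_G$ rather than a thin neighbourhood of an interval, so the relevant "distance to the zeros'' that governs the contraction estimate behind Theorem~\ref{thm:main general} can be taken as large as the full horizontal extent $\lambda+\rho_G$ rather than merely $\eps$.

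The main obstacle is exactly this last point: converting the half-plane zero-freeness into the best possible exponential rate. Concretely, the proof of Theorem~\ref{thm:main general} presumably bounds ratios of the form $Z_{G\setminus v}(\lambda)/Z_G(\lambda)$ and iterates a contraction along a self-avoiding walk tree or a similar recursive structure, and the contraction factor is governed (via a Cauchy-type estimate on a disk contained in the zero-free region) by the radius of that disk. For a general interval-neighbourhood hypothesis that radius is $\sim\eps$, giving rate $1+O(e^{-\lambda/\eps})$; but for claw-free graphs one can center a disk of radius $\sim\lambda+\rho_G\gtrsim \lambda+1/(e\Delta)$ inside the half-plane, and optimizing the resulting geometric-series bound over the center and radius yields the claimed $r=1+O((\lambda e\Delta)^{-1/2})$. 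So the real work is the optimization of the Cauchy/Montel estimate over disks in the half-plane $\Re(z)>-c/\Delta$, together with checking that the ratio $Z_{G\setminus v}/Z_G$ stays bounded on such disks (again via Lemma~\ref{lem:bounded ratios near zero}). Everything else — the reduction to Theorem~\ref{thm:main general}, the invocation of Chudnovsky--Seymour, and the Shearer--Scott--Sokal lower bound on $\rho_G$ — is routine.
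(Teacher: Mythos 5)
Your overall strategy matches the paper's: Chudnovsky--Seymour gives real zeros, the Shearer/Scott--Sokal bound (Lemma~\ref{lem:bounded ratios near zero}) pushes them to the left of $-\tfrac{1}{e\Delta}$, and the improvement over Theorem~\ref{thm:main general} must come from the fact that the zero-free region is a half-plane rather than a thin neighbourhood of an interval. But there are two genuine problems with the write-up.

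First, you have the comparison of rates backwards. In \eqref{eq:def SSM ind} a \emph{larger} $r$ means \emph{faster} decay of $Cr^{-d}$, so $r=1+O((\lambda e\Delta)^{-1/2})$ is a much \emph{stronger} conclusion than $r=1+O(e^{-\lambda/\eps})$ with $\eps\asymp 1/\Delta$: writing $x=\lambda e\Delta$, one has $x^{-1/2}\gg e^{-x}$. The direct application of Theorem~\ref{thm:main general} is therefore not ``more than enough''; it is strictly weaker, and the better rate is the entire point of Theorem~\ref{thm:main claw}. Your subsequent explanation (that the issue is only about implied constants for small $\lambda$) rests on this inversion.

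Second, the step you defer --- ``optimizing the Cauchy estimate over disks in the half-plane'' --- is where the real content lies, and as sketched it fails. The coefficient-matching argument (the first $d_G(v,\sigma\neq\tau)$ Taylor coefficients of $P_{G[\sigma],v}(\lambda z)$ and $P_{G[\tau],v}(\lambda z)$ agree, via the cluster expansion, Lemma~\ref{lem:ratio series}) is tied to the expansion at $z=0$, so Cauchy's estimate must be applied on a disk \emph{centered at $0$} containing $z=1$; any such disk has radius $>1$ and so leaves the half-plane $\{\Re z>-\delta/\lambda\}$ when $\delta/\lambda<1$, while re-centering the disk destroys the coefficient matching. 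The paper's resolution is Lemma~\ref{lem:bound strip}(ii): precompose with Barvinok's conformal map $h(z)=\delta/(1-\zeta z)^2-\delta$, which fixes $0$ and $1$, has zero constant term (hence preserves agreement of the first $N$ coefficients), and maps a disk of radius $1+\sqrt{\delta}$ into the zero-free region --- this is exactly where the square root in the rate comes from, and it is absent from your sketch. Finally, boundedness of the ratios on the relevant compact set cannot come from Lemma~\ref{lem:bounded ratios near zero} alone, which only controls $|\lambda|<\frac{(\Delta-1)^{\Delta-1}}{\Delta^{\Delta}}$; away from the origin the paper needs Montel's theorem (Lemma~\ref{lem:zero-free implies bounded ratios}).
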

We prove this result in Section~\ref{sec:ind}.

\subsubsection*{Algorithms}
As remarked in~\cite{Bayatietalmatchings}, strong spatial mixing by itself is not sufficient to approximately compute the probabilities $\Pr_\mu[v\in I]$ in polynomial time (in case one wants the additive error to be at most order $1/|V(G)|$). 
Our approach for showing strong spatial mixing implicitly yields a polynomial time algorithms for this task. We comment on this at the end of Section~\ref{sec:ind}.


\subsection{Graph homomorphism measures}

Let $q\geq 2$ be an integer and let $A$ be a symmetric $q\times q$ matrix.
For a graph $G=(V,E)$ we define the \emph{graph homorphism partition function} $Z_G(A)$ by
\begin{equation}\label{eq:graph hom def}
    Z_G(A):=\sum_{\psi\to [q]}\prod_{uv\in E} A_{\psi(u),\psi(v)},
\end{equation}
where $[q]:=\{1,\ldots,q\}.$ Note that in case $A$ is the adjacency matrix of a graph $H$, then $Z_A(G)$ is equal to the number of graph homomorphisms from $G$ to $H$.
For nonnegative (and nonzero) matrices $A$ there is a natural associated probability measure, the \emph{graph homomorphism measure} $\mu_{G,A}$, on the set of all $q$-colorings of the vertices of $G$, $\Omega_{V,q}=\{\psi: V\to [q]\}$, defined by for $\psi\in \Omega_{V,q}$,
\[
\mu_{G,A}(\psi):=\frac{\prod_{uv\in E}A_{\psi(u),\psi(v)}}{Z_G(A)},
\]
where we implicitly assume that $Z_G(A)\neq 0$.
For $\Lambda\subset V$ we call any $\sigma:\Lambda\to[q]$ a \emph{boundary condition} on $\Lambda$.
Let $v\in V\setminus \Lambda.$
We denote for $i\in [q]$ by $\Pr_\mu[\psi(v)=i \mid \sigma]$ the probability that the vertex $v$ gets color $i$ in the random $q$-coloring $\psi$ drawn according to the measure $\mu_{G,A}$ conditioned on $\sigma$, meaning that we condition on the event that $\psi$ agrees with $\sigma$ on $\Lambda$, where we implicitly assume that this latter event has positive measure.
As in the case of the independence polynomial, for another boundary condition $\tau$ on $\Lambda$, we denote by $d_G(v,\sigma\neq \tau)$ the graph distance from $v$ to the nearest vertex in $\Lambda$ at which $\sigma$ and $\tau$ differ.

Let $\mathcal{G}$ be an infinite family of graphs.
We say that the measure $\mu=\mu_{G,A}$ satisfies \emph{strong spatial mixing on $\mathcal{G}$} with exponential rate $r>1$ if there exists a constant $C>0$ such that for any graph $G=(V,E)\in \mathcal{G}$, any vertex $v\in V$, any $i\in [q]$, any $\Lambda\subseteq V\setminus \{v\}$ and any two boundary conditions $\sigma$ and $\tau$ on $\Lambda$,
\begin{equation}\label{eq:def SSM hom}
\left|\Pr_\mu[\psi(v)=i\mid \sigma]-\Pr_\mu[\psi(v)=i\mid \tau]\right|\leq Cr^{-d_G(v,\sigma\neq \tau)}.
\end{equation}

Recall that by $\mathcal{G}_\Delta$ we denote the family of graphs of maximum degree at most $\Delta$.
Using Barvinok's~\cite[Theorem 7.1.4]{BarvinokBook}, or rather its proof, which provides a zero-free region for the graph homomorphism partition function, we prove in Section~\ref{sec:hom} the following result. 

\begin{theorem}\label{thm:main hom}
Let $\Delta\geq 3$ and $q\geq 2$ be integers and let for some $\alpha=\alpha_\Delta<2\pi/3\Delta$,
\[
\delta_\Delta:=\sin(\alpha/2)\cos(\alpha\Delta/2).
\]
Fix $\eta\in (0,1)$. Then
for any real $q\times q$ symmetric matrix $A$ satisfying $|A_{i,j}-1|<(1-\eta)\delta_\Delta$ for all $i,j=1,\ldots,q$ the measure $\mu_{G,A}$ satisfies strong spatial mixing on $\mathcal{G}_\Delta$ with exponential rate $r=1/(1-\eta)$.
\end{theorem}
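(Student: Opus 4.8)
The strategy for Theorem~\ref{thm:main hom} should mirror the approach used for the hard-core model (Theorem~\ref{thm:main general}): translate absence of zeros into a quantitative statement about ratios of partition functions on graphs obtained by ``pinning'' (conditioning on boundary values), and then run a telescoping/coupling argument along shortest paths from $v$ to the disagreement set. First I would recall Barvinok's result~\cite[Theorem~7.2.]{BarvinokBook}, which in this regime says that $Z_G(A')\neq 0$ for all $G\in\mathcal{G}_\Delta$ whenever every entry of $A'$ lies within $\delta_\Delta$ of $1$ (the quantity $\delta_\Delta=\sin(\alpha/2)\cos(\alpha\Delta/2)$ is exactly the radius coming from his argument via the parametrization $A'=J+(A-J)$ and a rotation trick with the angle $\alpha$). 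The key point is that the hypothesis $|A_{i,j}-1|<(1-\eta)\delta_\Delta$ leaves a multiplicative slack of $1/(1-\eta)$: along the segment from $J$ (the all-ones matrix) to $A$, one can travel a further factor before hitting the zero-free boundary. This slack is what will become the exponential rate $r=1/(1-\eta)$.

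**Reducing SSM to a ratio estimate.**
The conditional probability $\Pr_\mu[\Psi(v)=i\mid\sigma]$ can be written as a ratio $Z_{G}^{\sigma,\,v\mapsto i}(A)/Z_{G}^{\sigma}(A)$, where the superscripts denote the graph homomorphism partition function with the listed vertices frozen to the listed colors; freezing a vertex is itself a ``pinning'' operation that keeps us inside the class of graph-homomorphism-type sums (one can absorb a frozen vertex $u$ with color $j$ into modified external fields, or equivalently think of it as deleting $u$ and multiplying surviving partition functions of components by the appropriate entries $A_{j,\cdot}$). The goal is then to show that changing the boundary condition at a vertex at distance $t$ from $v$ perturbs this ratio by at most $C(1-\eta)^{t}$. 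The standard device is to express the difference $\Pr_\mu[\Psi(v)=i\mid\sigma]-\Pr_\mu[\Psi(v)=i\mid\tau]$ as a sum over the vertices on a shortest path (or over a BFS layering), where at each step we change one pinned value; telescoping reduces everything to: \emph{a single-site change at distance $t$ from $v$ affects the marginal at $v$ by a factor controlled by the zero-free region via a Montel/normal-families or direct Cauchy-estimate argument applied to the ratio as an analytic function on the segment connecting the two matrices $J$ and $A$.}

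**The analytic heart of the argument.**
Here is where Barvinok's zero-freeness enters quantitatively. Consider the univariate polynomial $p(z)=Z_G^{\,\bullet}(J+z(A-J))$ for a pinned graph; by the hypothesis and Barvinok's theorem, $p$ has no zeros in a disk of radius $1/(1-\eta)$ around $0$ (since the entries stay within $(1-\eta)\delta_\Delta\cdot\tfrac{1}{1-\eta}=\delta_\Delta$ of $1$ on that disk). Therefore $\log p(z)$ is analytic on that disk, and the ratio of two such partition functions differing in one pinned value is of the form $\exp(g_1(z)-g_2(z))$ with $g_j=\log p_j$. The derivatives of $g_j$ at $z=1$ are then bounded by Cauchy estimates in terms of the slack $1/(1-\eta)$; iterating the ``pinning moves'' away from $v$ out to distance $t$ and tracking how a frozen vertex at distance $t$ contributes only to the $t$-th (and higher) Taylor coefficients — this is the usual mechanism by which ``frozen far away'' translates into ``appears only at high order'' — yields the bound $C\,(1-\eta)^{t}$. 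I expect the main obstacle to be precisely this last bookkeeping step: carefully verifying that a disagreement at distance $t=d_G(v,\sigma\neq\tau)$ genuinely contributes only through Taylor coefficients of order $\geq t$ (equivalently, that the two relevant partition functions agree to order $t-1$ in the interpolation parameter), which requires a clean combinatorial identity expressing $Z_G^{\sigma}$ minus $Z_G^{\tau}$ as a sum over colorings touching the disagreement vertex, each of which must ``pay'' at least $t$ factors of $(A-J)$ to connect back to $v$. Once that identity is in place, the Cauchy estimate on the zero-free disk of radius $1/(1-\eta)$ delivers the stated rate $r=1/(1-\eta)$ with a constant $C$ depending only on $\Delta$ and $q$.
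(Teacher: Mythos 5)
Your overall architecture (interpolate along $J+z(A-J)$, use the slack $1/(1-\eta)$ to get zero-freeness on a disk of radius $1/(1-\eta)$ in $z$, argue that a disagreement at distance $t$ only enters at Taylor order $\geq t$, finish with Cauchy estimates) is the same as the paper's, but the two steps you yourself identify as the heart of the matter are exactly where the proposal has genuine gaps.

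First, the uniform bound on the ratio. You propose to write the conditional probability as $\exp(g_1(z)-g_2(z))$ with $g_j=\log p_j$ and to control the $g_j$ by Cauchy estimates from zero-freeness of $p_j$. But non-vanishing of $p_j=Z_G^{\bullet}(J+z(A-J))$ on a disk only bounds the Taylor coefficients of $\log p_j$ by quantities proportional to $\deg p_j=|E(G)|$ (via the root-product factorization), so the constant $C$ you extract this way grows with the graph, which is not allowed in the definition of strong spatial mixing. What is needed is a \emph{graph-size-independent} bound on the ratio $P^\sigma_{G,v,i;A}(z)$ itself on a disk $\mathbb{D}_r$ with $r>1$; the paper gets this (Lemma~\ref{lem:bounded hom}) not from univariate zero-freeness of numerator and denominator but from a \emph{multivariate} zero-freeness statement for per-edge matrices (Theorem~\ref{thm:zero-free hom barvinok}): if $|P|>1/\eps$ one plants an external field $1-1/P$ at $(v,i)$, absorbs it into the edge matrices without leaving the zero-free regime, and obtains $Z^\sigma_G((B^e))=0$, a contradiction. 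This device (from~\cite{chen2021spectral}) is the missing idea; without it, or the Montel-type alternative used for the hard-core model, your Cauchy estimates have nothing uniform to act on.

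Second, the reduction and the locality of coefficients. Telescoping over single-site changes of the boundary condition is not harmless here: if each single-site change at distance $s$ contributes $Cr^{-s}$ with $r=1/(1-\eta)$, summing over the up to $\Delta^s$ disagreement sites at distance $s$ diverges whenever $r<\Delta$, so the telescoping cannot deliver the stated rate. The paper compares $P^\sigma$ and $P^\tau$ for the two full boundary conditions in one shot, using that their Taylor coefficients in $z$ agree up to order $d_G(v,\sigma\neq\tau)-1$. Moreover, your proposed route to that agreement --- a combinatorial identity for $Z^\sigma_G-Z^\tau_G$ --- targets the wrong object: the partition functions themselves do not agree to high order in $z$ (already their $O(z)$ terms see all of $\Lambda$); it is only the \emph{ratio} $Z^{\sigma_{v,i}}_G/Z^\sigma_G$ whose coefficients are local to $v$, and establishing this requires the polymer representation and the connectedness constraint in the cluster expansion (Lemmas~\ref{lem:hom is hard-core} and~\ref{lem:series ratio hom}), not a direct cancellation between $\sigma$ and $\tau$.
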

Note that $\delta_\Delta=\Omega(1/\Delta)$. 
Moreover note that this result is qualitatively similar to (but quantitatively better than) a result implicit in~\cite{lu2013improved}.
We also note that the conditions in the theorem guarantee that the graph homomorphism partition function is non-zero on graphs of maximum degree at most $\Delta$ for complex matrices $A$ satisfying $|A_{i,j}-1|\leq \delta_\Delta$ for all $i,j$ by~\cite[Theorem 7.1.4]{BarvinokBook}. 
Improvements to~\cite[Theorem 7.1.4]{BarvinokBook} do not automatically lead to improvements to Theorem~\ref{thm:main hom}, since in our proof we require that the zero-freeness also holds for graphs with boundary conditions. See Remark~\ref{rem:hom} for further discussion.

\subsection{Related work}
Our work falls into a recent series of contributions in which absence of complex zeros of the probability generating function of a discrete distribution gives rise to detailed probabilistic information about the distribution.

As mentioned earlier, the notion of strong spatial mixing is intimately connected to the design of efficient algorithms to (approximately compute) evaluations of graph polynomials and partition functions. 
Another well known approach for designing such algorithms for this task is based on Markov chains, in particular the Glauber dynamics. This of course then leads to randomized algorithms.
Very recently Chen, Liu and Vigoda~\cite{chen2021spectral}, building on~\cite{Anarietal}, showed that absence of zeros for partition functions of several models in a multivariate sense leads to proofs of rapid mixing of the Glauber dynamics for these models.

Another application of absence of complex zeros is found in~\cite{LPRS,MS1,MS2,jain2021approximate}, where central limit theorems are derived for discrete probability distributions taking a finite number of values in the nonnegative integers, whose probability generating function $p(X)$, defined as $p(x)=\sum_{k\geq 0}\Pr[X=k]x^k$, has no zeros in the vicinity of $x=1.$ 

\subsection{Overview of proof}
Our proof consists of essentially two main steps. The first step is to view the conditional probability that we try to control as an evaluation of a rational function $P(z)$ at $z=1$ and utilize absence of complex zeros to show that $|P(z)|$ is bounded on some domain containing $z=1$ and $z=0$. 
This is done in two different ways. For the graph homomorphism partition function this is done using absence of zeros in the multivariate sense, while for the independence polynomial we only require absence of zeros for the univariate polynomial by using the powerful Montel theorem from complex analysis.
Once it is known that the rational function $P(z)$ is bounded, then by using Cauchy's formula we obtain bounds on the coefficients of its series expansion. We interpret these coefficients combinatorially with the aid of the cluster expansion to arrive at the desired strong spatial mixing results.

The remainder of the paper is organized as follows. In the next section we gather the tools that we need to prove our results. In Section~\ref{sec:ind} we prove our two results on the hard-core model and in Section~\ref{sec:hom} we prove Theorem~\ref{thm:main hom}.
Finally in Section~\ref{sec:conclude} we conclude with some remarks and questions.

\section{Tools}
\subsection{Convention}
We will often deal with functions $f$ holomorphic on some open set $U\subset \mathbb{C}$ containing $0$. Therefore, near $0$, $f$ has a convergent series expansion $f(z)=\sum_{k\geq 0}a_k z^k$.
In such a situation we often just write $f(z)=\sum_{k\geq 0}a_k z^k$ near $0$. 

For $r>0$ we denote by $\mathbb{D}_r$ the open disk centered at $0$ of radius $r$.

\subsection{The cluster expansion}
The cluster expansion is a formal series expansion of the logarithm of a so-called polymer partition function~\cite{FriedliVelenik}.
The polymer partition function can also be viewed as the multivariate independence polynomial of an associated graph~\cite{ScottSokal05}, which is the perspective we take here.

Let $G=(V,E)$ be a graph. Let $w=(w_v)_{v\in V}$ be a vector of complex variables. Then the \emph{multivariate independence polynomial} of $G$ is defined as 
\begin{equation}\label{eq:def multi ind}
Z_G(w)=\sum_{\substack{I\subseteq V\\ I \text{ independent }}}\prod_{v\in I}w_v.    
\end{equation}
For a sequence of (not necessarily distinct) vertices $(v_1,\ldots,v_k)$, $v_i\in V$, $i=1,\ldots,k$, we denote by $G(v_1,\ldots,v_k)$ the graph on the vertex set $\{1,\ldots,k\}$ where for $i\neq j$,  $i$ is adjacent to $j$ if and only if $v_i=v_j$ or $\{v_i,v_j\}\in E$ and we call $G(v_1,\ldots,v_k)$ the \emph{cluster} induced by $v_1,\ldots,v_k$.
The \emph{Ursell function} of a graph $H$ is defined as 
\begin{equation}
    \phi(H)=\sum_{\substack{F\subseteq E(H)\\ (V(H),F) \text{ connected}}}(-1)^{|F|}.
\end{equation}
Note that by definition $\phi(H)=0$ if the graph $H$ is not connected.

The \emph{cluster expansion} is the following formal power series representation of $\log(Z_G(w))$~\cite{KP86,ScottSokal05},
\begin{equation}\label{eq:cluster expansion}
    \log(Z_G(w))=\sum_{k\geq 1}\frac{1}{k!}\sum_{v_1,\ldots,v_k\in V} \phi(G(v_1,\ldots,v_k)) \prod_{i=1}^k w_{v_i}.
\end{equation}
Under certain conditions on the $w_v$ the cluster expansion converges~\cite{KP86,ScottSokal05}. 
We will however not need to use these conditions.
For our purposes it suffices that if all $w_v$ are small enough in absolute value (possibly depending on the underlying graph $G$), then the cluster expansion converges.

\subsection{Some complex analysis}
The next lemma is a consequence of Cauchy's differentiation's theorem (which in turn follows from the integral formula).
\begin{lemma}\label{lem:consequence Cauchy}
Let $P(z)$ be a holomorphic function on $\mathbb{D}_r$ for some $r>1$, with series expansion $P(z)=\sum_{k\geq 0} a_k z^k$ near $0$. 
Suppose that $|P(z)|$ is bounded by $M$ on $\mathbb{D}_r$.
Then the radius of convergence of the series expansion is bigger than $1$ and for any $N\in \N$, 
\[|P(1)-\sum_{k=0}^{N-1} a_k|\leq \frac{Mr}{(r-1)r^{N}}.
\]
\end{lemma}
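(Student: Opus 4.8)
The plan is to apply Cauchy's estimates directly to the remainder of the Taylor series. First I would note that since $P$ is holomorphic on the disk $\mathbb{D}_r$ with $r>1$, its Taylor series $\sum_{k\geq 0} a_k z^k$ converges on all of $\mathbb{D}_r$, so in particular the radius of convergence is at least $r>1$; this justifies plugging in $z=1$ and writing $P(1)=\sum_{k\geq 0} a_k$. Hence the quantity to control is the tail $|P(1)-\sum_{k=0}^{N-1}a_k| = |\sum_{k\geq N} a_k|$, and it suffices to bound $|a_k|$ and sum a geometric series.

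For the coefficient bound I would use Cauchy's integral formula for the Taylor coefficients: for any $\rho$ with $0<\rho<r$,
\[
a_k = \frac{1}{2\pi i}\int_{|z|=\rho} \frac{P(z)}{z^{k+1}}\,dz,
\]
so that $|a_k|\leq M\rho^{-k}$ by the standard ML-estimate, using $|P(z)|\leq M$ on $|z|=\rho$. Letting $\rho\to r$ gives $|a_k|\leq M r^{-k}$ for every $k$. Then
\[
\left|P(1)-\sum_{k=0}^{N-1}a_k\right| \leq \sum_{k\geq N} |a_k| \leq M\sum_{k\geq N} r^{-k} = \frac{M}{r^{N}}\cdot\frac{1}{1-r^{-1}} = \frac{M}{(r-1)r^{N-1}},
\]
which is slightly stronger than the claimed bound $\frac{M}{(r-1)r^N}$, so in particular the claimed inequality follows. (If one prefers to hit the stated bound exactly, one can instead estimate the tail by $\sum_{k\geq N} r^{-k}\leq \int_{N-1}^\infty r^{-t}\,dt$ or simply observe $\frac{1}{(r-1)r^{N-1}}=\frac{r}{(r-1)r^N}\ge \frac{1}{(r-1)r^N}$ is the wrong direction — so the honest statement is that we actually prove the inequality with $r^N$ replaced by $r^{N-1}$ in the denominator, which implies the weaker stated form only when $r\geq \text{something}$; cleaner is just to report the $r^{N-1}$ bound, or to note $\sum_{k\ge N}r^{-k} = \frac{r^{-N}}{1-r^{-1}}=\frac{1}{(r-1)r^{N-1}}$ and that the paper's statement should read $r^{N-1}$.)

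The only subtlety — and it is minor — is justifying the interchange of limit and sum when passing from $|a_k|\le M\rho^{-k}$ (valid for each fixed $\rho<r$) to $|a_k|\le Mr^{-k}$: this is immediate since for fixed $k$ the bound $M\rho^{-k}$ is continuous in $\rho$ and one simply takes the infimum over $\rho\in(0,r)$, or equivalently the limit $\rho\uparrow r$. There is no real obstacle here; the lemma is a routine packaging of Cauchy's inequality plus a geometric series, and the main ``work'' is just bookkeeping the constant in the tail bound. I would present it in three lines: (i) convergence on $\mathbb{D}_r$ hence at $z=1$; (ii) $|a_k|\le Mr^{-k}$ via Cauchy; (iii) sum the tail.
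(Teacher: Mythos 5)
Your argument is the same as the paper's: Cauchy's integral formula for the coefficients on a circle of radius $\rho<r$, let $\rho\uparrow r$, then sum the geometric tail. The one point of divergence is the exponent, and here you are right and the paper's own proof is sloppy. The ML-estimate applied to $a_k=\frac{1}{2\pi i}\int_{|w|=\rho}P(w)w^{-k-1}\,dw$ gives $|a_k|\le M\rho^{-k}$ (the factor $\rho^{-k-1}$ from the integrand is compensated by the contour length $2\pi\rho$), whereas the paper asserts $|a_k|\le M\rho^{-k-1}$, which overstates Cauchy's inequality by a factor of $\rho$; from the correct estimate one gets $\bigl|\sum_{k\ge N}a_k\bigr|\le \frac{M}{(r-1)r^{N-1}}$, exactly as you computed. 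Your parenthetical is also right that this does \emph{not} imply the stated $\frac{M}{(r-1)r^{N}}$ (your first claim that your bound is ``slightly stronger'' has the inequality backwards, but you correct yourself); in fact the stated bound is false in general: take $P(z)=M(z/r)^{N}$, for which $|P|\le M$ on $\mathbb{D}_r$ and $|P(1)-\sum_{k=0}^{N-1}a_k|=M r^{-N}$, exceeding $\frac{M}{(r-1)r^{N}}$ whenever $r>2$. So the lemma should be stated with $r^{N-1}$ in the denominator (or with the hypothesis $r\le 2$). This is harmless downstream: everywhere the lemma is invoked, $r=1+o(1)$, so the extra factor of $r$ is absorbed into the implicit constants and none of the strong spatial mixing rates change.
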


\begin{proof}
Choose $\rho$ so that $1<\rho<r$.
By Cauchy's differentiation's theorem we have
\[
a_k=\frac{1}{2\pi i}\int_{\partial \mathbb{D}_\rho} \frac{P(w)}{w^{k+1}} dw.
\]
This implies that $|a_k|\leq M/\rho^{k}$. Since this holds for any $1<\rho<r$, it follows that $|a_k|\leq M/r^{k}$.
Bounding $P(z)$ by a geometric series it follows that the radius of convergence is bigger than $1$.
It follows similarly that $|\sum_{k\geq N}a_k|\leq \frac{M r^{-N}}{1-1/r}=\frac{M r}{(r-1)r^{N}}$,
as desired.
\end{proof}

Typically we will not have functions defined on disks, but rather on neighbourhoods of real intervals. 

\begin{lemma}\label{lem:bound strip}
Let $P(z)=\sum_{k\geq 0} a_k z^k$ and $Q(z)=\sum_{k\geq 0} b_k z^k$ be two holomorphic functions defined on some open set containing $0$ that satisfy $a_k=b_k$ for $k=0,\ldots, N$ for some $N\in \mathbb{N}$.
Then 
\begin{itemize}
    \item[(i)] If there exists $\eps>0$ and $M>0$ such that both $|P(z)|$ and $|Q(z)|$ are bounded by $M$ on $\mathcal{N}([0,1],2\eps)$, then there exists a constant $r=1+O(e^{-1/\eps})$ such that
\begin{equation}\label{eq:bound 1}
    |P(1)-Q(1)|\leq \frac{2M r}{(r-1)r^{N}}.
\end{equation}
\item[(ii)] If there exists $\delta>0$ such that for any compact set $S\subset \{z\mid \Re(z)>-\delta\}$ intersecting the positive real line there exists a constant $M=M_S$ such that both $|P(z)|$ and $|Q(z)|$ are bounded by $M_S$ on $S$, then there exists a compact set $S$ such that with $r= 1+\sqrt{\delta}$,
\begin{equation}\label{eq:bound 2}
    |P(1)-Q(1)|\leq \frac{2M_S r}{(r-1)r^{N}}.
\end{equation}
\end{itemize}
\end{lemma}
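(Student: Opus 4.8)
The plan is to reduce both parts of Lemma~\ref{lem:bound strip} to the disk statement of Lemma~\ref{lem:consequence Cauchy} by precomposing $P$ and $Q$ with a conformal map that sends a disk $\mathbb{D}_r$ (with $r>1$) into the region on which we have the bound, while keeping $0$ and $1$ in the image. The point of the hypothesis $a_k=b_k$ for $k\le N$ is that $R:=P-Q$ has a series expansion $\sum_{k\ge 0}c_kz^k$ near $0$ with $c_k=0$ for $k\le N$; moreover $R$ is bounded by $2M$ (resp. $2M_S$) on the relevant region. So it suffices to bound $|R(1)|$, and for that we want to apply the tail estimate of Lemma~\ref{lem:consequence Cauchy} to $R\circ\varphi$ for a suitable $\varphi$ with $\varphi(0)=0$ and $\varphi(w_0)=1$ for some point $w_0$ in the disk; then the coefficients of $R\circ\varphi$ up to order $N$ vanish as well (precomposition with a map fixing $0$ preserves vanishing of the first $N+1$ Taylor coefficients, since $\varphi(w)^k$ starts at order $k$), and $|R(1)-\sum_{k=0}^{N-1}(\text{coeff})| = |R(1)|$.

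For part (i): I would take $\varphi$ to be a conformal map from a disk $\mathbb{D}_\rho$ onto a lens-shaped (or strip-like) neighbourhood of $[0,1]$ contained in $\mathcal{N}([0,1],2\eps)$, chosen so that $\varphi(0)=0$ and $1$ lies in the image. A clean way: map the disk to an infinite strip $\{|\Im z|<\eps\}$ by a logarithmic/exponential map, or use a Möbius map composed with a power. Concretely, the map $z\mapsto \frac{2\eps}{\pi}\log\frac{1+z}{1-z}$ carries $\mathbb{D}_1$ onto the strip $\{|\Im w|<\eps\}$, but we need the image of a slightly smaller disk $\mathbb{D}_\rho$ to stay inside $\mathcal{N}([0,1],2\eps)$ and to contain $[0,1]$; one then rescales and translates. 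Tracking how the radius $\rho<1$ at which the preimage disk must be taken depends on $\eps$ gives, after pulling back, an effective radius $r=1+O(e^{-1/\eps})$ in the $w$-plane (the exponential arises because a strip of width $\eps$ pulls back, under $\log$, to an angular sector whose opening is $O(\eps)$, and covering a fixed real segment forces $\rho = 1 - \Theta(e^{-c/\eps})$). Then Lemma~\ref{lem:consequence Cauchy} applied to $R\circ\varphi$ on $\mathbb{D}_r$ with bound $2M$ and $N$ vanishing coefficients yields $|R(1)|\le \frac{2M}{(r-1)r^N}$, which is exactly \eqref{eq:bound 1}.

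For part (ii): now the region is a half-plane-ish set $\{\Re z>-\delta\}$, but the bound $M_S$ is only locally available on compact sets. I would first fix a concrete compact set $S$ — say a closed disk or a closed rectangle inside $\{\Re z>-\delta\}$ that contains $[0,1]$ in its interior, e.g. a disk centred on the real axis passing near $-\delta$ — and get the uniform bound $M_S$ there. Then I would choose a Möbius (or affine, if $S$ is already a disk) map $\varphi$ taking a disk $\mathbb{D}_r$ onto (the interior of) $S$ with $\varphi(0)=0$ and $1$ in the image. If $S$ is taken to be the disk of radius $\tfrac12+\delta$ centred at $\tfrac12$... actually simpler: let $S$ be the closed disk centred at $0$ of some radius $\rho^*>1$ contained in $\{\Re z>-\delta\}$, which forces $\rho^*\le$ something like $\delta$ away — that does not contain $1$ if $\delta$ is small. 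So instead center the disk on the positive axis; an elementary geometry computation shows one can fit a disk containing $[0,1]$ whose boundary stays in $\Re z>-\delta$, and rescaling it to $\mathbb{D}_r$ about $0$ via $z\mapsto cz$ with the center shift handled by a Möbius map produces the claimed $r=1+\sqrt\delta$ (the square root is the natural scale: a disk through $-\delta+i0$ and containing $1$ has the point $0$ at relative radius $1-\Theta(\sqrt\delta)$ from the boundary when parametrised appropriately). Pulling back and invoking Lemma~\ref{lem:consequence Cauchy} for $R\circ\varphi$ on $\mathbb{D}_r$ gives \eqref{eq:bound 2}.

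The main obstacle is the explicit bookkeeping in part (i): producing a conformal map whose domain disk has radius exactly $1+O(e^{-1/\eps})$ after one has insisted that the image both lies in the $2\eps$-neighbourhood of $[0,1]$ and contains the whole segment $[0,1]$. One must be a little careful that the segment is covered with room to spare (so that $1$ is genuinely interior to the image of the smaller disk $\mathbb{D}_r$, not just of $\mathbb{D}_1$), and that the two constraints — width $\eps$ and length $1$ — combine to give the exponential rather than polynomial dependence; the clean way to see this is to conformally straighten the neighbourhood to a strip and then use the standard fact that the disk of radius $\rho$ maps under $z\mapsto\log\frac{1+z}{1-z}$ onto a region that exhausts a horizontal strip of half-width $\eps$ only as $\rho\to1$ like $1-\Theta(e^{-\pi/(2\eps)})$. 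Part (ii) is easier because a single fixed choice of compact $S$ works and everything is an elementary Möbius computation; the only care needed is choosing $S$ (and hence $M_S$) before choosing $r$, which the statement already allows. Finally, in both parts one notes that the vanishing of the first $N+1$ Taylor coefficients of $R$ at $0$ is inherited by $R\circ\varphi$ because $\varphi(0)=0$, so that the sum $\sum_{k=0}^{N-1}$ in Lemma~\ref{lem:consequence Cauchy} is zero and one reads off precisely $|P(1)-Q(1)|=|R(1)|\le \frac{2M}{(r-1)r^N}$.
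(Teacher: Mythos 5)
Your overall reduction is the paper's: precompose with a conformal map $\varphi$ satisfying $\varphi(0)=0$ and $\varphi(1)=1$, observe that agreement of the first $N+1$ Taylor coefficients is preserved because $\varphi$ vanishes at $0$, and then invoke Lemma~\ref{lem:consequence Cauchy}. For part (i) this matches the paper, which uses the explicit map $g(z)=\eps\log(1/(1-\alpha z))$ with $\alpha=1-e^{-1/\eps}$ and $r=\frac{1-e^{-1-1/\eps}}{1-e^{-1/\eps}}$, citing Barvinok for the fact that $g(\mathbb{D}_r)\subset\mathcal{N}([0,1],2\eps)$ and $g(1)=1$. Two small cautions: you really need $\varphi(1)=1$ exactly (not just $\varphi(w_0)=1$ for some $w_0$), since Lemma~\ref{lem:consequence Cauchy} evaluates at the point $1$ of the disk; and your candidate map $z\mapsto\frac{2\eps}{\pi}\log\frac{1+z}{1-z}$ is odd, so as soon as its image reaches real part $1$ it also reaches real part $-1$ and leaves $\mathcal{N}([0,1],2\eps)$ — you must use an asymmetric variant such as the paper's $g$. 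With that correction your part (i) is the paper's argument in outline, with the exponential correctly attributed to the logarithmic uniformization of a width-$\eps$ neighbourhood of a unit-length segment.

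Part (ii) has a genuine gap: the construction you propose cannot produce $r=1+\sqrt{\delta}$. If $S$ is a disk contained in $\{\Re z>-\delta\}$ and containing $0$ and $1$, both points lie within distance $O(\delta)$ of $\partial S$, so the hyperbolic distance in $S$ between $0$ and $1$ is $\geq 2\log(1/\delta)-O(1)$; a Möbius map $\mathbb{D}_r\to S$ is a hyperbolic isometry and $d_{\mathbb{D}_r}(0,1)=\log\frac{r+1}{r-1}$, which forces $r-1=O(\delta^2)$. Even dropping surjectivity, the Schwarz--Pick lemma applied to any holomorphic $\varphi:\mathbb{D}_r\to\{\Re z>-\delta\}$ with $\varphi(0)=0$, $\varphi(1)=1$ gives $\log\frac{r+1}{r-1}\geq\log\frac{1+\delta}{\delta}$, i.e.\ $r\leq 1+2\delta$, still far short of $1+\sqrt{\delta}$ for small $\delta$. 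The square root is not a disk phenomenon: it comes from mapping into a \emph{slit} plane. The paper uses $h(z)=\frac{\delta}{(1-\xi z)^2}-\delta$ with $\xi=1-\sqrt{\delta/(1+\delta)}$ (a squared Möbius map, from Barvinok), whose image of $\overline{\mathbb{D}_{1+\sqrt{\delta}}}$ is a compact set avoiding the ray $\{x\in\mathbb{R}\mid x<-3\delta/4\}$ but \emph{not} contained in the half-plane $\{\Re z\geq-\delta\}$; the degree-$2$ branching at the tip of the slit is exactly what converts distance $\delta$ to the forbidden set into conformal room $\sqrt{\delta}$. (Relatedly, the hypothesis of (ii) is really used in the paper for functions bounded on compact sets avoiding a negative real ray, which is what the applications to claw-free graphs provide; restricted to disks or half-planes your route would only yield the weaker rate $1+O(\delta)$.)
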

\begin{proof}
We start with the proof of part (i).
Let $r=\frac{1-e^{-1-1/\eps}}{1-e^{-1/\eps}}$ and let $\alpha=1-e^{-1/\eps}$. Note that $r=1+O(e^{-1/\eps})$. 
Define $g(z)=\eps \log(1/(1-\alpha z))$ on $\mathbb{D}_r$ taking the branch of the logarithm that satisfies $g(0)=0$. Barvinok shows in his proof of~\cite[Lemma 2.2.3]{BarvinokBook} that $g$ maps the disk $\mathbb{D}_r$ into $\mathcal{N}([0,1],2\eps)$ and that $g(1)=1.$

We now consider the compositions $P\circ g$ and $Q\circ g$ on $\mathbb{D}_r$.
Let us write $P\circ g=\sum_{k\geq 0} a_k g(z)^k=\sum_{k\geq 0} a'_k z^k$ and $Q\circ g=\sum_{k\geq 0} b_k g(z)^k=\sum_{k\geq 0}b'_k z^k$.
Then the coefficients $a'_k$ (resp. $b'_k$) depend only on $a_0,\ldots, a_k$ (resp. $b_0,\ldots, b_k)$ and the first $k$ coefficients of the Taylor series of $g$ around $0$, since $g$ has constant term equal to $0$.
This implies that $a'_k=b'_k$ for $k=0,\ldots, N.$
Since both $|(P\circ g)(z)|$ and $|(Q\circ g)(z)|$ are bounded by $M$ on $\mathbb{D}_r$, Lemma \ref{lem:consequence Cauchy} in combination with the triangle inequality now implies that 
\[
|P(1)-Q(1)|=|(P\circ g)(1)-(Q\circ g) (1)|\leq 2 \frac{M r}{(r-1)r^{N}},
\]
as desired.

For the proof of part (ii) consider for $\xi=1-\sqrt{\frac{\delta}{1+\delta}}$ the map $h(z)=\frac{\delta}{(1-\zeta z)^2}-\delta$. By Lemma 2.4 of~\cite{Barvinokreal}, $h$ maps the open disk $\mathbb{D}_{\xi^{-1}}$ into the set $\mathbb{C}\setminus \{z\in \mathbb{R}\mid z<-3/4\delta\}$ and satisfies $h(0)=0$ and $h(1)=1.$
Let $r=1+\sqrt{\delta}$. Then $1<r<\xi^{-1}.$
Denote by $S$ the image under $h$ of the closure of $\mathbb{D}_{r}$. Then $S$ is a compact set contained in $\{z\mid \Re(z)\geq -\delta\}$ and therefore both $P$ and $Q$ are bounded on this set by some constant $M:=M_S=M_\delta$.
The proof now proceeds in exactly the same way as in case (i).
\end{proof}
\begin{remark}
We could have also used the Riemann mapping theorem in the proof above to get suitable map $g$ and $h$. For concreteness the present ones are convenient, as we can easily compute their Taylor series, thereby making them suitable for algorithmic applications.
\end{remark}

\subsubsection*{Montel's theorem}
An important tool in our proof of Theorem~\ref{thm:main claw} is the use of Montel's theorem. 
This is a cornerstone result in the theory of modern complex dynamical systems~\cite{zalcman,complexdynamicsbook,Milnor} and has recently found applications in the study of determining the location of zeros and the complexity of approximating evaluations of the independence polynomial~\cite{BGGS,buys,chaoticratios}.

We need a definition to state the theorem.
We denote by $\widehat \C=\C\cup \{\infty\}$ the extended complex plane.
Let $U\subset\C$ be an open set. A family $\mathcal{F}$ of holomorphic functions $f:U\to \widehat \C$ is called a \emph{normal} family if each infinite sequence of elements of $\mathcal{F}$ has a subsequence that converges locally uniformly to a holomorphic function.

\begin{theorem}[Montel]\label{thm:montel}
$U\subset\C$ be a connected open set and let $\mathcal{F}$ be a family of holomorphic functions $f:U\to \widehat \C$.
Suppose that there exists three distinct points $a,b,c\in \widehat\C$ such that $f(U)\subset \widehat \C \setminus \{a,b,c\}$ for all $f\in \mathcal{F}$. Then the family $\mathcal{F}$ is normal.
\end{theorem}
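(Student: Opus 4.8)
The plan is to reduce the statement to a standard equicontinuity criterion and then feed in the one genuinely non-elementary fact about the thrice-punctured sphere. First I would post-compose: a M\"{o}bius transformation of $\widehat\C$ is a biholomorphism of $\widehat\C$, hence a homeomorphism of $\widehat\C$, so it carries $\mathcal{F}$ to a family with the same normality status; choosing it to send $a,b,c$ to $0,1,\infty$, I may assume that every $f\in\mathcal{F}$ takes values in $\Omega:=\C\setminus\{0,1\}$. Next I would equip $\widehat\C$ with the spherical metric $\sigma$ and use the spherical form of Arzel\`{a}--Ascoli: since $\widehat\C$ is compact, a family of holomorphic maps $U\to\widehat\C$ is normal as soon as it is equicontinuous on compact subsets of $U$ with respect to $\sigma$, and any $\sigma$-locally-uniform limit of such maps is again a holomorphic map $U\to\widehat\C$ (possibly the constant $\infty$). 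Because equicontinuity is a local condition and $U$ can be exhausted by countably many closed disks, it then suffices to prove local equicontinuity of $\mathcal{F}$ near an arbitrary point $z_0\in U$, after which a diagonal argument over the exhaustion produces the desired subsequence.

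The key input I would invoke is that $\Omega$ is a \emph{hyperbolic} Riemann surface: its universal covering space is the unit disk (equivalently, the elliptic modular function is a holomorphic covering map $\mathbb{D}_1\to\Omega$), so $\Omega$ carries a complete conformal metric $\rho$ of curvature $-1$. Granting this, fix a Euclidean disk $D\subset U$ centred at $z_0$ and, for each $f\in\mathcal{F}$, regard $f$ as a holomorphic map $D\to\Omega$. By the Schwarz--Pick / Ahlfors--Schwarz lemma, $f$ is distance non-increasing from the hyperbolic metric $\lambda_D$ of $D$ to $\rho$, that is $f^\ast\rho\le\lambda_D$ on $D$, with a bound that does not depend on $f$.

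Finally I would compare $\rho$ with $\sigma$. Near each of the punctures $0,1,\infty$ the hyperbolic metric $\rho$ blows up --- by comparing with small punctured disks and using monotonicity of hyperbolic metrics under inclusion, $\rho$ grows like $|dw|/(|w|\log(1/|w|))$ near $0$, and similarly at the other two punctures --- whereas $\sigma$ remains bounded on all of $\widehat\C$; hence $\sigma\le C\rho$ on $\Omega$ for a constant $C$ depending only on $a,b,c$. On a concentric disk $D'$ with $\overline{D'}\subset D$, where $\lambda_D\le K\,|dz|$ for some constant $K=K(D,D')$, chaining the two inequalities gives
\[
f^\ast\sigma\ \le\ C\,f^\ast\rho\ \le\ C\,\lambda_D\ \le\ CK\,|dz|\qquad\text{on }D',
\]
uniformly in $f\in\mathcal{F}$. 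In other words the spherical derivatives of all members of $\mathcal{F}$ are uniformly bounded on $D'$, which is precisely the local equicontinuity needed to conclude.

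I expect the main obstacle to be exactly the hyperbolic-metric input --- the existence of the modular covering map $\mathbb{D}_1\to\C\setminus\{0,1\}$, i.e.\ the uniformization of the thrice-punctured sphere --- which is the only ingredient that goes beyond elementary complex analysis; given it, the comparison $\sigma\le C\rho$ only requires the local shape of $\rho$ near a puncture, which is routine. An alternative, metric-free route would lift each restriction $f|_D$ through the covering map to a map $D\to\mathbb{D}_1$, obtaining a uniformly bounded --- hence, by the elementary Montel theorem, normal --- family of lifts and pushing a convergent subsequence back down; this is workable but forces one to handle separately the degenerate case in which the lifts converge to a point on the boundary circle, a complication that the spherical-metric argument above sidesteps automatically.
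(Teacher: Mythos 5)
The paper does not prove this theorem; it is quoted as a known result with references, so there is no in-paper argument to compare against. Your proof is the standard one (Möbius reduction to the thrice-punctured sphere, uniformization/hyperbolicity of $\C\setminus\{0,1\}$, Schwarz--Pick, comparison of the hyperbolic and spherical metrics, then Marty/Arzel\`a--Ascoli), and it is correct; the only steps you take on faith beyond the modular covering are themselves standard (that a locally uniform spherical limit of meromorphic functions is meromorphic or constantly $\infty$, and the metric comparison $\sigma\leq C\rho$, which holds because $\rho/\sigma\to\infty$ at the finite punctures and $\sigma/\rho\to 0$ at $\infty$).
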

See e.g.~\cite{zalcman,complexdynamicsbook,Milnor} for variations, extensions and a proof of Theorem~\ref{thm:montel}.

\section{The hard-core model}\label{sec:ind}
In this section we will prove Theorems~\ref{thm:main general} and ~\ref{thm:main claw}.

Let us introduce for a graph $G$ and a vertex $v$ of $G$, the \emph{ratio},
\begin{equation}\label{eq:def ratio}
P_{G,v}(\lambda)=\frac{\lambda Z_{G\setminus N[v]}(\lambda)}{Z_{G}(\lambda)},
\end{equation}
considered as a rational function in $\lambda$. Here $N[v]$ denotes the closed neighbourhood of $v$.
Note that for positive $\lambda$, $P_{G,v}(\lambda)$ is just the probability of the vertex $v$ being in the random independent set drawn from the hard-core measure. 


We introduce some further notation to facilitate the discussion.
Let $G=(V,E)$ be a graph and let $v\in V$ and $\Lambda\subset V\setminus \{v\}$. For a boundary condition $\sigma$ on $\Lambda$ we denote by $G[\sigma]$ be the graph obtained from $G\setminus \Lambda$ by removing all neighbours in $G\setminus \Lambda$ of vertices in $\Lambda$ that are set to `in' by $\sigma$. Note that $G[\sigma]$ is an induced subgraph of $G$.
Denote by $\sigma_{v,1}$ the boundary condition on $\Lambda\cup \{v\}$ extending $\sigma$ that assigns $1$ to $v$.
Let $\lambda>0$. Then
\begin{equation}\label{eq:cond prob is ratio}
\Pr_{\mu}[v \in I\mid \sigma]=P_{G[\sigma],v}(\lambda).
\end{equation}
Indeed, by Bayes' rule, writing $\Pr_\mu[\sigma]$ for the probability that the random independent set $I$ drawn according to $\mu$ satisfies $\sigma^{-1}(1)\subseteq I$ and $\sigma^{-1}(0)\cap I=\emptyset$, we have
\[
Pr_{\mu}[v \in I\mid \sigma]=\frac{\Pr_\mu[\sigma_{v,1}]}{\Pr_\mu[\sigma]}=\frac{\lambda^{|\sigma^{-1}(1)|+1}Z_{G[\sigma_{v,1}]}(\lambda)/Z_G(\lambda)}{\lambda ^{|\sigma^{-1}(1)|}Z_{G[\sigma]}(\lambda)/Z_G(\lambda)}=P_{G[\sigma],v}(\lambda),
\]
as claimed.

\subsection{Bounded ratios imply strong spatial mixing}
We start by giving a series expansion of the ratios.
For a sequence of (not necessarily distinct) vertices $(v_1,\ldots,v_k)$ from $G$ and a vertex $v\in V$ we denote by $m_v(v_1,\ldots,v_k)$ the number of $i$ such that $v=v_i$.
 \begin{lemma}\label{lem:ratio series}
Let $G=(V,E)$ be a graph, $v$ a fixed vertex of $G$ and let $\lambda$ be a complex variable. Then near $\lambda=0$ we have the following series expansion of $P_{G,v}$
\begin{equation}
P_{G,v}(\lambda)=\sum_{k\geq 1}\frac{1}{k!} \sum_{v_1,\ldots,v_{k}\in V}\phi(G(v_1,\ldots,v_k))m_v(v_1,\ldots,v_{k})\lambda^{k}.    
\end{equation}
\end{lemma}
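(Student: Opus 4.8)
The plan is to derive the series expansion of $P_{G,v}(\lambda)$ directly from the cluster expansion \eqref{eq:cluster expansion} applied to the (univariate) independence polynomials appearing in the definition \eqref{eq:def ratio} of the ratio. First I would write, near $\lambda = 0$,
\[
\log Z_G(\lambda) = \sum_{k\geq 1}\frac{1}{k!}\sum_{v_1,\ldots,v_k\in V}\phi(G(v_1,\ldots,v_k))\,\lambda^k,
\]
which is the specialization of \eqref{eq:cluster expansion} to the constant weight vector $w_v = \lambda$ for all $v\in V$. The key observation is that $P_{G,v}(\lambda) = \lambda\, Z_{G\setminus N[v]}(\lambda)/Z_G(\lambda)$ can be recognized as a logarithmic derivative: since $Z_G(\lambda)$, as a polynomial, satisfies $Z_G(\lambda) = Z_{G\setminus v}(\lambda) + \lambda Z_{G\setminus N[v]}(\lambda)$, one has $\lambda\,\partial_\lambda \log Z_G$ picking out, in its multivariate refinement, exactly the clusters weighted by the number of times $v$ is used. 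More precisely, I would use the multivariate cluster expansion with distinct variables $w_u$, apply the operator $w_v\,\partial_{w_v}$, and then set all $w_u = \lambda$.

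The concrete steps are: (1) Start from the multivariate identity $\log Z_G(w) = \sum_{k\geq1}\frac{1}{k!}\sum_{v_1,\ldots,v_k}\phi(G(v_1,\ldots,v_k))\prod_{i=1}^k w_{v_i}$, valid for $w$ in a small enough polydisk (the paper explicitly permits us to use convergence for small weights without invoking the Kotecký–Preiss / Scott–Sokal criterion). (2) Apply $w_v\frac{\partial}{\partial w_v}$ to both sides. On the left, $w_v\partial_{w_v}\log Z_G(w) = \frac{w_v\,\partial_{w_v}Z_G(w)}{Z_G(w)} = \frac{w_v Z_{G\setminus N[v]}(w)}{Z_G(w)}$, using that $Z_G(w)$ is multilinear in $w_v$ with $\partial_{w_v}Z_G(w) = Z_{G\setminus N[v]}(w)$ (the independent sets containing $v$ are in bijection with independent sets of $G\setminus N[v]$). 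On the right, $w_v\partial_{w_v}\prod_{i=1}^k w_{v_i} = m_v(v_1,\ldots,v_k)\prod_{i=1}^k w_{v_i}$. (3) Specialize $w_u = \lambda$ for all $u$, which turns the left side into $P_{G,v}(\lambda)$ and yields exactly the claimed formula. (4) Since everything is a finite sum in the multilinear/polynomial setting, or else an absolutely convergent series for $\lambda$ small, termwise differentiation and specialization are justified.

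The one point requiring a little care — and the main (minor) obstacle — is justifying the interchange of the derivative $w_v\partial_{w_v}$ with the infinite sum over $k$ in the cluster expansion, and the subsequent specialization $w_u\mapsto\lambda$; this is legitimate because for $|\lambda|$ sufficiently small (depending on $G$) the series converges absolutely and locally uniformly in a neighborhood of the point $w = (\lambda,\ldots,\lambda)$, so standard results on termwise differentiation of power series apply. Everything else is bookkeeping: matching $w_v\partial_{w_v}Z_G/Z_G$ with the ratio $P_{G,v}$, and observing that applying $w_v\partial_{w_v}$ to a monomial $\prod_i w_{v_i}$ multiplies it by the multiplicity $m_v(v_1,\ldots,v_k)$.
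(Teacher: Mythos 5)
Your proposal is correct and follows essentially the same route as the paper's proof: the multivariate cluster expansion, the identity $w_v\partial_{w_v}\log Z_G(w)=w_vZ_{G\setminus N[v]}(w)/Z_G(w)$, the observation that $w_v\partial_{w_v}$ multiplies a monomial $\prod_i w_{v_i}$ by $m_v(v_1,\ldots,v_k)$, and the specialization $w_u=\lambda$ together with uniqueness of power-series coefficients. No substantive differences.
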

\begin{proof}
We introduce a vector of complex variables $w=(w_v)_{v\in V}$.
Then for the multivariate independence polynomial, we have 
\[
\frac{w_{v}Z_{G\setminus N[v]}(w)}{Z_G(w)}=w_v\frac{\partial}{\partial w_{v}} \log(Z_G(w)),
\]
as long as $Z_G(w)\neq 0$.
By the cluster expansion~\eqref{eq:cluster expansion}, this implies that for small enough $w_u$ (for all $u\in V$), we have
\begin{align*}
\frac{w_{v}Z_{G\setminus N[v]}(w)}{Z_G(w)}&=w_v\sum_{k\geq 1}\frac{1}{k!}\sum_{v_1,\ldots,v_{k}\in V} \phi(G(v_1,\ldots,v_{k}))\frac{\partial}{\partial w_{v}}\prod_{i=1}^{k} w_{v_i}.
\end{align*}
Now note that
\[
\frac{\partial}{\partial w_{v}}\prod_{i=1}^{k} w_{v_i}=m_v(v_1,\ldots,v_k)\frac{\prod_{i=1}^{k} w_{v_i}}{w_v}.
\]
By plugging in $w_v=\lambda$ for each $v$ and making use of the uniqueness of the coefficients of the power series representation, this completes the proof.
\end{proof}
For a graph $G=(V,E)$, a vertex $v\in V$ and a positive integer $k$, we denote
\[B_G(v,k):=\{u\in V\mid d_G(u,v)\leq k\},
\]
the \emph{distance at most $k$-neighbourhood of $v$}.
\begin{lemma}\label{lem:bounded implies SSM}
Let $\mathcal{G}$ be a family of graphs that is closed under taking induced subgraphs.
\begin{itemize}
    \item[(i)] Let $\lambda^\star$ be such that there exists constants $\eps>0$ and $M>0$ such that for all $\lambda\in \mathcal{N}([0,\lambda^\star],\eps)$, the ratios, $P_{G,v}$, satisfy $|P_{G,v}(\lambda)|\leq M$ for all $G\in \mathcal{G}$ and all vertices $v\in V(G)$. 
Then for any $\lambda\in (0,\lambda^\star]$ the hard-core measure satisfies strong spatial mixing on $\mathcal{G}$ with exponential rate $r=1+O(e^{-\lambda/\eps})$.
\item[(ii)] If there exists $\delta>0$ such that for any compact set $S\subset \{z\mid \Re(z)>-\delta\}$ intersecting the positive real line there exists a constant $M=M_S$ such that the ratios, $P_{G,v}$, satisfy $|P_{G,v}(\lambda)|\leq M_S$ for all $G\in \mathcal{G}$ all vertices $v\in V(G)$ and all $\lambda\in S$. 
Then for any $\lambda>0$ the hard-core measure satisfies strong spatial mixing on $\mathcal{G}$ with exponential rate $r=1+O(\sqrt{\delta/\lambda})$.
\end{itemize}
\end{lemma}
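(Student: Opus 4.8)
The plan is to reduce, via~\eqref{eq:cond prob is ratio}, to bounding $|P_{G[\sigma],v}(\lambda)-P_{G[\tau],v}(\lambda)|$. Since $G[\sigma]$ and $G[\tau]$ are induced subgraphs of $G$ and $\mathcal G$ is closed under taking induced subgraphs, both lie in $\mathcal G$, so the hypotheses apply to them; when $v$ is deleted by $\sigma$ (equivalently, $v$ has a neighbour in $\sigma^{-1}(1)$) we read $P_{G[\sigma],v}\equiv 0$, which is consistent with $\Pr_\mu[v\in\mathrm{I}\mid\sigma]=0$ and, through Lemma~\ref{lem:ratio series}, with $m_v\equiv 0$. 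Writing $d:=d_G(v,\sigma\neq\tau)$ we may assume $\sigma\neq\tau$, so $d<\infty$, and note $d\geq 1$ because $v\notin\Lambda$. The proof then has two ingredients: (a) the Taylor series at $0$ of $P_{G[\sigma],v}$ and $P_{G[\tau],v}$ agree up to order $N:=d-1$; (b) Lemma~\ref{lem:bound strip} converts (a) into a bound at $\lambda$.

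For (a) I would argue from Lemma~\ref{lem:ratio series}: a term indexed by $(v_1,\dots,v_k)$ contributes to the $\lambda^k$-coefficient of $P_{G[\sigma],v}$ only if $\phi(G[\sigma](v_1,\dots,v_k))\neq0$, so the cluster is connected, and only if $m_v(v_1,\dots,v_k)\geq1$, so $v\in\{v_1,\dots,v_k\}$. Connectedness forces the at most $k$ distinct vertices among $v_1,\dots,v_k$ to induce a connected subgraph of $G[\sigma]$, and since it contains $v$ this subgraph puts all $v_i$ in $B_{G[\sigma]}(v,k-1)\subseteq B_G(v,k-1)$. On the other hand, because $\sigma$ and $\tau$ agree on all vertices of $\Lambda$ within distance $<d$ of $v$, a direct check shows that $G[\sigma]$ and $G[\tau]$ induce the same graph on $B_G(v,d-2)$: a vertex $u\notin\Lambda$ with $d_G(v,u)\leq d-2$ survives in $G[\sigma]$ iff none of its neighbours --- all within distance $d-1$ of $v$ --- lies in $\sigma^{-1}(1)$, equivalently in $\tau^{-1}(1)$, and all edges are inherited from $G$. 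Hence for $k\leq d-1$ every cluster contributing to the $\lambda^k$-coefficient of $P_{G[\sigma],v}$ lives inside $B_G(v,d-2)$, where it is equally a cluster of $G[\tau]$ with the same Ursell factor and the same $m_v$, and conversely; so the coefficients of $\lambda^0,\dots,\lambda^{d-1}$ coincide. (The cases $d=1$, and $v$ deleted by exactly one of $\sigma,\tau$ --- which forces $d=1$ --- only require agreement of the constant terms, which vanish.)

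For (b), fix $\lambda$ and rescale so that the point of interest becomes $1$: set $P(z):=P_{G[\sigma],v}(\lambda z)$ and $Q(z):=P_{G[\tau],v}(\lambda z)$, holomorphic near $0$ (the relevant independence polynomials equal $1$ at $0$) with matching coefficients up to order $N=d-1$ by (a). In case (i), $\lambda\leq\lambda^\star$ gives $\mathcal N([0,1],\eps/\lambda)\subseteq\{z:\lambda z\in\mathcal N([0,\lambda^\star],\eps)\}$, so $|P|,|Q|\leq M$ on $\mathcal N([0,1],2\eps')$ with $\eps'=\eps/(2\lambda)$, and Lemma~\ref{lem:bound strip}(i) produces a constant $r=1+O(e^{-\lambda/\eps})$ with
\[
\left|P(1)-Q(1)\right|\leq\frac{2M}{(r-1)r^{N}}=\frac{2Mr}{r-1}\,r^{-d}.
\]
In case (ii), for any compact $T\subset\{z:\Re(z)>-\delta/\lambda\}$ meeting the positive real line, the set $\lambda T$ is compact, lies in $\{z:\Re(z)>-\delta\}$ and meets the positive real line, so $|P|,|Q|\leq M_{\lambda T}$ on $T$; Lemma~\ref{lem:bound strip}(ii) applied with $\delta/\lambda$ in place of $\delta$ gives a fixed compact set, a finite constant $M$ and $r=1+\sqrt{\delta/\lambda}=1+O(\sqrt{\delta/\lambda})$ with $|P(1)-Q(1)|\leq 2M/((r-1)r^{N})$. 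Since $P(1)-Q(1)=\Pr_\mu[v\in\mathrm{I}\mid\sigma]-\Pr_\mu[v\in\mathrm{I}\mid\tau]$ and $N=d-1$, in both cases we obtain~\eqref{eq:def SSM ind} with the claimed exponential rate $r$ and a constant $C=\tfrac{2Mr}{r-1}$ that does not depend on $G,v,\sigma,\tau$.

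I expect step (b) to be essentially bookkeeping once Lemmas~\ref{lem:ratio series} and~\ref{lem:bound strip} are available; the main obstacle is the combinatorial localization in (a) --- establishing that $G[\sigma]$ and $G[\tau]$ literally agree as induced subgraphs on $B_G(v,d-2)$, that a size-$k$ cluster meeting $v$ cannot escape $B_G(v,k-1)$, and keeping the two distinct off-by-one shifts straight so that the coefficients are shown to match up to precisely order $N=d-1$.
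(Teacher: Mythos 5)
Your proposal is correct and follows essentially the same route as the paper: rescale to $P(z)=P_{G[\sigma],v}(\lambda z)$, $Q(z)=P_{G[\tau],v}(\lambda z)$, use Lemma~\ref{lem:ratio series} to show the coefficients agree up to order $d_G(v,\sigma\neq\tau)-1$ because connected clusters through $v$ of size $k$ live in $B_G(v,k-1)$ and $G[\sigma]$, $G[\tau]$ agree there, then apply Lemma~\ref{lem:bound strip}(i)/(ii) with the rescaled parameters $\eps/\lambda$ and $\delta/\lambda$. Your treatment is in fact slightly more careful than the paper's on the edge cases (the off-by-one bookkeeping and the case where $v$ is removed by a boundary condition).
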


\begin{proof}

We start with the proof of (i).
Fix $\lambda\in [0,\lambda^\star]$ and let $\eps'>0$ be such that $\lambda\cdot \mathcal{N}([0,1],\eps')\subset \mathcal{N}([0,\lambda^\star],\eps))$.
Note that $\eps'=O(\eps/\lambda)$.
Let $r>1$ be the constant obtained from Lemma~\ref{lem:bound strip}(i) upon input of $\eps'/2$.
Choose any $G\in \mathcal{G}$ and fix a vertex $v$ of $G$. Choose $\Lambda\subset V(G)\setminus \{v\}$ and two boundary conditions $\sigma=\sigma_\Lambda$ and $\tau=\tau_\Lambda$ on $\Lambda$. 
We will show that 
\begin{equation}\label{eq:prob bound}
   \big|\prob_{G,\lambda}[v \text{ in }\mid \sigma_\Lambda]-\prob_{G,\lambda}[v \text{ in } \mid \tau_\Lambda]\big|\leq \frac{2Mr}{(r-1)r^{d_G(v,\sigma\neq \tau)-1}},
\end{equation}
implying the desired statement.

Define for $z\in \C$, rational functions in $z$, $P(z):=P_{G[\sigma],v}(z\lambda)$ and $Q(z):=P_{G[\tau],v}(z \lambda)$.
Then by~\eqref{eq:cond prob is ratio},
\begin{equation}\label{eq:scaled ratios at 1}
\prob[v \text{ in }\mid \sigma_\Lambda]=P(1) \quad \text{and} \quad \prob[v \text{ in }\mid \tau_\Lambda]=Q(1).
\end{equation}
By Lemma~\ref{lem:ratio series} we know that, as formal series in $z$,
\begin{align}\label{eq:series in z}
P(z)&=\sum_{k\geq 1}\frac{1}{k!} \sum_{v_1,\ldots,v_{k}\in V}\phi(G[\sigma](v_1,\ldots,v_{k}))m_v(v_1,\ldots,v_k)\lambda^{k}z^{k},\nonumber
\\
Q(z)&=\sum_{k\geq 1}\frac{1}{k!} \sum_{v_1,\ldots,v_{k}\in V}\phi(G[\tau](v_1,\ldots,v_{k}))m_v(v_1,\ldots,v_k)\lambda^{k}z^{k}.   
\end{align}
In particular the coefficient of $z^{k}$ in $P$ (resp. $Q$) depends only on the vertices in $G[\sigma]$ (resp. $G[\tau]$) that have distance at most $k-1$ to $v$ since by definition $\phi(G(v_1,\ldots,v_k))m_v(v_1,\ldots,v_k)=0$ if the cluster induced by $v_1,\ldots,v_k$ is not connected or if it does not contain the vertex $v$.
For $k<d_G(v,\sigma\neq \tau)-1$ the distance at most $k$-neighbourhoods, $B_{G[\sigma]}(v,k)$ and $B_{G[\tau]}(v,k)$ are equal.
Therefore for any $k=0,\ldots,d_G(v,\sigma\neq \tau)-1$ we know that the coefficients of $z^k$ of the power series representations for $P(z)$ and $Q(z)$ are equal.

Now by construction for any $z\in \mathcal{N}([0,1],\eps')$ we have $\lambda z\in \mathcal{N}([0,\lambda^\star],\eps)$ and therefore $|P(z)|$ and $|Q(z)|$ are bounded by $M$ on $\mathcal{N}([0,1],\eps')$.
We now use Lemma~\ref{lem:bound strip}(i) to conclude that $|P(1)-Q(1)|$ is bounded by 
\[\frac{2Mr}{(r-1)r^{d_G(v,\sigma\neq \tau)-1}},\] 
proving~\eqref{eq:prob bound}.

The proof of (ii) is very similar.
Again we fix $\lambda>0$ and define $\delta'=\delta/\lambda$. Let $r>1$ be the constant obtained from Lemma~\ref{lem:bound strip}(ii) upon input of $\delta'$.
As in the proof of (i), choose any $G\in \mathcal{G}$ and fix a vertex $v$ of $G$. 
Choose $\Lambda\subset V(G)\setminus \{v\}$ and two boundary conditions $\sigma=\sigma_\Lambda$ and $\tau=\tau_\Lambda$ on $\Lambda$. 
As above we define $P(z)=P_{G[\sigma],v}(\lambda z)$ and $Q(Z)=P_{G[\tau],v}(\lambda z)$. 
We let $S$ be the compact set from Lemma~\ref{lem:bound strip}(ii) upon input $\delta'$ and let $M=M_S$.
Then in exactly the same way as above, with $r=1+\sqrt{\delta'}$ we obtain~\eqref{eq:prob bound}.
This finishes the proof.
\end{proof}

\subsubsection*{Algorithm}
To approximately compute the conditional probabilities $\Pr_\mu[v\in {I}\mid \sigma]$ we need to approximate $P$ evaluated at $1$.
The basic idea is just to truncate the series for $P\circ g$ at depth $K=O(\log(n/\eps))$ for an $n$-vertex graph $G$, so as to obtain an additive $\eps/n$-approximation.
Since the coefficients of $P\circ g$ can be easily computed from those of $g$ and $P$ (in time $O(K^2)$ using Horner's method), the real algorithmic task is to compute the first $K$ coefficients of $P$. This can be done efficiently (i.e. in time $\Delta^{O(K)}$ on graphs of maximum degree at most $\Delta$) with an algorithm appearing in the proof of Theorem 6 from~\cite{TPR20}.
We leave the details to the reader.

\subsection{Absence of zeros implies bounded ratios}
In this section we cover the final ingredients to prove our main result for the hard-core measure on bounded degree graphs.

Before we start we note that for a vertex transitive graph $G$ it is not hard to see that \[\frac{P_{G,v}(\lambda)}{\lambda}=\frac{d}{d\lambda}\frac{\log(Z_G(\lambda))}{|V(G)|}\] 
and therefore absence of zeros of $Z_G$ on an infinite family of vertex transitive graphs on some open set containing $0$ implies boundedness of the ratios, $P_{G,v}$, as can be derived from the proof of~\cite[Lemma 2.2.1]{BarvinokBook} (in combination with the Riemann mapping theorem). 
Below we show this is also true for bounded degree graphs in general.
\begin{lemma}\label{lem:zero-free implies bounded ratios}
Let $\mathcal{G}$ be a family of graphs that is closed under taking induced subgraphs.
Let $U\subseteq \C$ be an open set such that for any graph $G\in \mathcal{G}$ and any $\lambda\in U$ $Z_G(\lambda)\neq 0$.
Then for any compact set $S\subset U\setminus \{0\}$ that intersects the positive real line there exists a constant $M>0$ such that for all $G\in \mathcal{G}$, $v\in V(G)$ and $\lambda$ in $S$,
\[
|P_{G,v}(\lambda)|\leq M.
\]
\end{lemma}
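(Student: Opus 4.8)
The plan is to realise the ratios $P_{G,v}$ as a \emph{normal family} of holomorphic maps into $\widehat\C$ via Montel's theorem, and then to fix the right normalisation of the limits using the probabilistic meaning of $P_{G,v}$ at positive real arguments.

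\emph{Step 1: omitted values.} The fundamental recursion $Z_G(\lambda)=Z_{G\setminus v}(\lambda)+\lambda Z_{G\setminus N[v]}(\lambda)$ gives
\[
P_{G,v}(\lambda)=\frac{\lambda Z_{G\setminus N[v]}(\lambda)}{Z_G(\lambda)}=1-\frac{Z_{G\setminus v}(\lambda)}{Z_G(\lambda)}.
\]
Since $\mathcal{G}$ is closed under taking induced subgraphs, $G\setminus v$ and $G\setminus N[v]$ lie in $\mathcal{G}$, so $Z_{G\setminus v}$ and $Z_{G\setminus N[v]}$ are also zero-free on $U$. Hence $P_{G,v}$ is holomorphic on $U$ and, viewed as a map $U\to\widehat\C$, it omits $\infty$ on $U$, omits $1$ on $U$ (as $1-P_{G,v}=Z_{G\setminus v}/Z_G$ is zero-free there), and omits $0$ on $U\setminus\{0\}$ (as $P_{G,v}=\lambda\cdot Z_{G\setminus N[v]}/Z_G$ is a product of three factors that are each nonvanishing on $U\setminus\{0\}$).

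\emph{Step 2: normality, an anchor, and spreading.} By Montel's theorem, for every connected open $W\subseteq U\setminus\{0\}$ the family $\mathcal{F}_W=\{\,P_{G,v}|_W : G\in\mathcal{G},\ v\in V(G)\,\}$ is normal. This is where the hypothesis on the positive real line enters: for $\lambda>0$ we have $P_{G,v}(\lambda)=\Pr_\mu[v\in\mathrm I]\in[0,1]$, so the family is uniformly bounded at every point of $U\cap\R_{>0}$, and this set is nonempty since $S$ meets $\R_{>0}$. I would then let $B$ be the set of $z\in U\setminus\{0\}$ for which $\sup_{G,v}|P_{G,v}(z)|<\infty$ and show that $B$ is open and relatively closed in $U\setminus\{0\}$. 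For openness (and, moving the anchor point, closedness), around $z_0\in U\setminus\{0\}$ take a disk $D$ with $\overline D\subseteq U\setminus\{0\}$; if $D$ meets $B$, say at $p$, I claim $\mathcal{F}_D$ is uniformly bounded on compact subsets of $D$. If not, some sequence $P_{G_n,v_n}$ is unbounded on a compact $K\subseteq D$; by normality pass to a subsequence converging locally uniformly in the spherical metric to a holomorphic $f:D\to\widehat\C$; since $\{P_{G_n,v_n}(p)\}$ is bounded, $f(p)\in\C$, so $f\not\equiv\infty$; Hurwitz's theorem applied to the omitted values $0,1,\infty$ then forces $f$ to be a holomorphic map $D\to\C$ (either constantly $0$ or $1$, or omitting all three of $0,1,\infty$), hence bounded on $K$; but spherical convergence to a limit that is bounded on $K$ is uniform Euclidean convergence on $K$, contradicting the unboundedness. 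Thus $B$ is clopen and nonempty in $U\setminus\{0\}$, so it is a union of connected components of $U\setminus\{0\}$; in particular it contains the component of every positive real in $U$. Since the sets $S$ fed into this lemma are connected (being continuous images of disks) and meet $\R_{>0}$, we get $S\subseteq B$. Finally, $B$ is open and $S$ compact, so finitely many disks on which $\mathcal{F}$ is uniformly bounded cover $S$, and the maximum of the corresponding bounds is the desired constant $M$.

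\emph{Main obstacle.} The one genuine difficulty is that Montel only yields normality in the spherical sense, so a priori a sequence $P_{G_n,v_n}$ could converge to the constant function $\infty$; this is exactly what Step 2 rules out, by combining (a) the probabilistic interpretation of $P_{G,v}$ on $\R_{>0}$, which supplies points where the whole family is uniformly bounded, with (b) the fact that any limit of the $P_{G_n,v_n}$ which is not identically $\infty$ inherits all three omitted values $0,1,\infty$ and is therefore an honest bounded holomorphic function. Everything else — the recursion, closure under induced subgraphs, and the clopen/compactness bookkeeping — is routine.
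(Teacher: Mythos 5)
Your proof is correct and follows essentially the same route as the paper's: both establish that the ratios omit $0$, $1$ and $\infty$ on $U\setminus\{0\}$ via the recursion $Z_G=Z_{G\setminus v}+\lambda Z_{G\setminus N[v]}$ together with closure under induced subgraphs, invoke Montel's theorem, and rule out the constant-$\infty$ limit by anchoring at a positive real point where the ratios are probabilities in $[0,1]$. Your clopen/Hurwitz bookkeeping is in fact more careful than the paper's (which tacitly assumes that non-degeneracy of the limit function propagates from the positive real axis to all of $S$), at the cost of explicitly treating only those $S$ contained in components of $U\setminus\{0\}$ meeting $\mathbb{R}_{>0}$ --- a restriction the paper's argument implicitly shares and which is harmless in every application.
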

\begin{proof}
Suppose to the contrary that the ratios are unbounded on $S$.
Then there exists a sequence of graphs $(G_n)_{n\geq 1}$ and vertices $v_n\in V(G_n)$ and a sequence of points $(\lambda_n)_{n\geq 1}$ in $S$ such that
\begin{equation}\label{eq:unbounded ratios sequence}
|P_{G_n,v_n}(\lambda_n)|\geq n.
\end{equation}
Since for any graph $G$ and vertex $v\in V(G)$, $Z_{G}(\lambda)=\lambda Z_{G\setminus N[v]}(\lambda)+Z_{G-v}(\lambda)$ 
and since $\mathcal{G}$ is closed under taking induced subgraphs, it follows that for any $\lambda\in U\setminus \{0\}$, the ratio $P_{G,v}(\lambda)$ must avoid the points $\infty,0$ and $1$.
Therefore, by Theorem~\ref{thm:montel} (Montel's theorem), the family of rational functions $\{\lambda\mapsto P_{G_n,v_n}(\lambda)\mid n\geq 1 \}$ forms a normal family on $U\setminus \{0\}$ and hence contains a subsequence that converges locally uniformly to some holomorphic function $f$.
In particular this convergence is uniform on $S$ by compactness.
Since for positive real $\lambda$ the ratios are just probabilities and hence contained in $[0,1]$, it follows that $f$ is not constant $\infty$.
But then the image of $S$ under $f$ must be bounded, implying that the ratios $P_{G_n,v_n}(\lambda_n)$ cannot be unbounded on $S$ by uniform convergence.
This contradicts our assumption and therefore there must be some bound $M=M_S$ on the absolute values of the ratios on the set $S$, as desired.
\end{proof}

The next lemma shows boundedness of the ratios for $\lambda$ near $0$ and is essentially due to Shearer~\cite{shearer} and Scott and Sokal~\cite{ScottSokal05}.
\begin{lemma}\label{lem:bounded ratios near zero}
Let $\mathcal{G}_\Delta$ be the family of graphs of maximum degree at most $\Delta\geq 3$. For any $\lambda$ such that $|\lambda|< \frac{(\Delta-1)^{\Delta-1}}{\Delta^\Delta}$, any graph $G\in \mathcal{G}_\Delta$, $v\in V(G)$ we have $|P_{G,v}(\lambda)|< \frac{1}{\Delta-2}$ and moreover $Z_{G}(\lambda)\neq 0$.
\end{lemma}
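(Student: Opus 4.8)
The plan is to proceed by induction on the number of vertices of $G$, using the standard recursion for the independence polynomial together with the self-improving nature of the bound $\frac{\Delta}{\Delta-1}$. Concretely, I would first record the identity $Z_G(\lambda) = \lambda Z_{G\setminus N[v]}(\lambda) + Z_{G-v}(\lambda)$, which gives
\[
\frac{1}{P_{G,v}(\lambda)} = 1 + \frac{Z_{G-v}(\lambda)}{\lambda Z_{G\setminus N[v]}(\lambda)}.
\]
Writing $v_1,\ldots,v_d$ for the neighbours of $v$ (so $d\le\Delta$), one can peel off these neighbours one at a time from $G-v$: removing $v_1$ from $G-v$ gives $Z_{G-v} = Z_{(G-v)-v_1} + \lambda Z_{(G-v)\setminus N[v_1]}$, and iterating expresses $Z_{G-v}/Z_{G\setminus N[v]}$ as a telescoping product of factors of the form $1/(1 - P_{H,v_i}(\lambda))$ where each $H$ is an induced subgraph of $G$ with strictly fewer vertices (obtained after deleting $v$ and some of the earlier $v_j$'s and their removed neighbourhoods). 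This is the classical Shearer/Scott–Sokal ratio recursion.

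The key step is then the inductive estimate: assuming $|P_{H,v_i}(\lambda)| < \frac{\Delta}{\Delta-1}$ for every relevant smaller induced subgraph $H$, and assuming $|\lambda| < \frac{(\Delta-1)^{\Delta-1}}{\Delta^\Delta}$, I want to conclude that $|P_{G,v}(\lambda)| < \frac{\Delta}{\Delta-1}$ and that $Z_G(\lambda)\neq 0$. From the peeling identity we get
\[
\Big|\frac{1}{P_{G,v}(\lambda)}\Big| \ge 1 - |\lambda|^{-1}\prod_{i=1}^{d}\frac{1}{|1 - P_{H_i,v_i}(\lambda)|}.
\]
Wait — this is the wrong direction; more carefully one writes $\frac{1}{P_{G,v}} - 1 = \frac{1}{\lambda}\prod_{i}(1-P_{H_i,v_i})^{-1}$ and bounds $|1 - P_{H_i,v_i}| \ge 1 - \frac{\Delta}{\Delta-1}\cdot(\text{something})$; the clean way is to use the inductive hypothesis to show each factor $|1-P_{H_i,v_i}(\lambda)|$ is bounded below by $1/(\Delta-1)$ times an appropriate constant, so that the whole product is at least $(\tfrac{1}{\Delta})^d$ up to the $\lambda$ factor — I would calibrate so that $|\lambda| < \frac{(\Delta-1)^{\Delta-1}}{\Delta^\Delta}$ is exactly the threshold making $|1/P_{G,v}(\lambda) - 1| > 1/(\Delta-1)\cdot(\text{const})$ hold, forcing $|P_{G,v}(\lambda)| < \frac{\Delta}{\Delta-1}$; non-vanishing of $Z_G$ follows because the peeled factors are all finite and nonzero, so $Z_G(\lambda)/Z_{G-v}(\lambda)$ is well-defined and nonzero, and $Z_{G-v}(\lambda)\neq 0$ by induction. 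The base case is the empty graph, where $Z_G(\lambda)=1$ and $P_{G,v}=\lambda$, trivially satisfying both bounds when $|\lambda|$ is below the threshold (note $\frac{(\Delta-1)^{\Delta-1}}{\Delta^\Delta} \le \frac{1}{\Delta} \le \frac{\Delta}{\Delta-1}$).

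The main obstacle is getting the constant bookkeeping exactly right: one must verify that the worst case in the product bound is when all $d=\Delta$ neighbours are present and each ratio sits at the boundary of the inductive bound, and that the function $x\mapsto x(1+\tfrac{1}{\Delta-1}x)^{-\Delta}$ (or whatever the precise one-variable certificate turns out to be) is maximized at the claimed threshold. Since the lemma statement says this is "essentially due to" Shearer and Scott–Sokal, I would expect to cite those references for the extremal computation rather than reprove it, and simply observe that their argument — which is usually phrased for real $\lambda$ or via the fixed-point of the tree recursion — goes through verbatim for complex $\lambda$ with $|\lambda|$ below the threshold because every estimate above is in absolute value. A secondary subtlety is ensuring the induced subgraphs $H_i$ arising in the peeling still lie in $\mathcal{G}_\Delta$ and have the vertex $v_i$ available, which is immediate since deleting vertices never increases the maximum degree and $v_i$ survives the deletions performed before it is peeled.
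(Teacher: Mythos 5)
Your high-level diagnosis is right: this lemma is the classical Shearer/Scott--Sokal peeling recursion, and the paper indeed does not reprove it --- it cites \cite[Lemma 2.10]{chaoticratios} for the bound $|R_{G,v}(\lambda)|<\tfrac{1}{\Delta-1}$ on the \emph{free} ratio $R_{G,v}(\lambda)=\lambda Z_{G\setminus N[v]}(\lambda)/Z_{G-v}(\lambda)$ together with $Z_G(\lambda)\neq 0$, and then converts to the probability-type ratio via $P_{G,v}=R_{G,v}/(1+R_{G,v})$, which follows from $Z_G=Z_{G-v}+\lambda Z_{G\setminus N[v]}$.

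As a self-contained argument, however, your induction has a genuine gap that is not mere ``constant bookkeeping''. You propose to induct on $P_{H,u}$ with target bound $\Delta/(\Delta-1)$, and your telescoping product has factors $(1-P_{H_i,v_i})^{-1}=Z_{H_i}/Z_{H_i-v_i}$. Since $\Delta/(\Delta-1)>1$, the hypothesis $|P_{H_i,v_i}|<\Delta/(\Delta-1)$ gives no lower bound at all on $|1-P_{H_i,v_i}|$, so the induction as set up cannot close. The induction must instead be carried on $R_{H,u}$, whose recursion reads $R_{G,v}=\lambda\prod_i(1+R_{G_i,v_i})^{-1}$ and whose inductive bound is a number strictly less than $1$; moreover one must record that each peeled vertex $v_i$ has degree at most $\Delta-1$ in $G_i$ (since $v$ has been deleted), so the recursive products have at most $\Delta-1$ factors. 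It is exactly this degree drop that yields the radius $(\Delta-1)^{\Delta-1}/\Delta^\Delta$ (optimize $x(1-x)^{\Delta-1}$ at $x=1/\Delta$); without it one only reaches $\Delta^\Delta/(\Delta+1)^{\Delta+1}$. The bound on $P_{G,v}$ is then recovered only at the end from $P=R/(1+R)$. Two smaller slips: your displayed inequality is garbled (as you note yourself), and in the one-vertex base case $P_{G,v}=\lambda/(1+\lambda)$, not $\lambda$.
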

\begin{proof}
We use~\cite[Lemma 2.10]{chaoticratios} that states that for $\lambda$ as in the statement of the lemma we have with $R_{G,v}(\lambda):=\frac{\lambda Z_{G\setminus N[v]}(\lambda)}{Z_{G-v(\lambda)}}$, $|R_{G,v}(\lambda)|<\frac{1}{\Delta-1}$ and $Z_{G}(\lambda)\neq 0$.
Using that $P_{G,\lambda}=\frac{R_{G,v}(\lambda)}{1+R_{G,v}(\lambda)}$ the first statement also follows.
\end{proof}

We can now prove our main results concerning strong spatial mixing for the hard-core measure. 
\begin{proof}[Proof of Theorem~\ref{thm:main general}]
Let $\mathcal{G}\subset \mathcal{G}_{\Delta}$ be a family of graphs of maximum degree at most $\Delta$ that is closed under taking induced subgraphs and let $\lambda^\star>0$ and $\eps>0$ be such that for all $\lambda\in \mathcal{N}([0,\lambda^\star],\eps)$ and $G\in \mathcal{G}$, $Z_G(\lambda)\neq 0.$
Then by Lemma~\ref{lem:bounded ratios near zero} and Lemma~\ref{lem:zero-free implies bounded ratios} we know that the ratios $P_{G,v}$ are bounded on $\mathcal{N}([0,\lambda^\star],\eps)$ by some constant $M$ for all graphs $G\in \mathcal{G}$ and all $v\in V(G)$.
Therefore the result follows from Lemma~\ref{lem:bounded implies SSM}(i).
\end{proof}

\begin{proof}[Proof of Theorem~\ref{thm:main claw}]
Let $\mathcal{G}'_\Delta\subset \mathcal{G}_{\Delta}$ be the family of claw-free graphs of maximum degree at most $\Delta$.
By the Chudnovsky-Seymour theorem~\cite{ChudnovskySeymour07} we know that all roots of $Z_G$ for $G\in \mathcal{G}'_\Delta$ are negative reals.
By Lemma~\ref{lem:bounded ratios near zero}, it follows that $Z_{G}(\lambda)\neq 0$ as long as $\lambda>-\tfrac{1}{e\Delta}$.
Combining Lemma~\ref{lem:bounded ratios near zero} and Lemma~\ref{lem:zero-free implies bounded ratios} we conclude that on any compact set $S$ that avoids the set $\{z\in \mathbb{R}\mid z\leq -\tfrac{1}{e\Delta}\}$ the ratios, $P_{G,v}$, for $G\in \mathcal{G}'_\Delta$ and any $v\in V(G)$ are bounded in absolute value by some constant $M_S$.
Therefore, Lemma~\ref{lem:bounded implies SSM}(ii) implies that hard-core measure on $\mathcal{G}'_\Delta$ at any $\lambda>0$ satisfies strong spatial mixing with exponential rate $r=1+O((\lambda \Delta)^{-1/2})$.
\end{proof}
\begin{remark}
Bencs~\cite{Bencs18} shows that the independence polynomials of graphs containing a moderate number of claws are zero free in some sector. 
In a similar way one can show that also for bounded degree graphs in this class of graphs the hard-core measure satisfies strong spatial mixing cf.~\cite[Section 3.5]{Barvinokreal}.
\end{remark}

\section{The graph homomorphism partition function}\label{sec:hom}
Here we prove Theorem~\ref{thm:main hom}. We follow the same strategy as in the previous section.

Let us start by introducing the ratios. 
Let $G=(V,E)$ be a graph and let $v\in V$, $i\in [q]$ and let $A$ be a symmetric, nonnegative $q\times q$ matrix, where $q$ is a positive integer such that $Z_G(A)>0$.
For a boundary condition $\sigma:\Lambda\to [q]$ on some set $\Lambda\subset V\setminus \{v\}$. 
We denote by $\sigma_{v,i}$ the extension of $\sigma$ to $\Lambda\cup \{v\}$ that assigns $i$ to the vertex $v$. 
In what follows we denote by $J$ the all ones matrix of the appropriate size.
We define the following rational function in the variable $z$
\begin{equation}\label{eq:ratio hom}
    P^\sigma_{G,v,i;A}(z)=\frac{Z^{\sigma_{v,i}}_{G}(J+z(A-J))}{Z^{\sigma}_{G}(J+z(A-J))}
\end{equation}
and refer to it as a \emph{ratio at $v$}.
We note that, as in the case of the hard-core model, we have 
\[
P^\sigma_{G,v,i;A}(1)=\Pr_{\mu_A}[\phi(v)=i\mid \sigma].
\]

\subsection{Ratios and their series expansion}
We do have to do a bit more work to find the series expansion of these ratios. 
In particular, we need to equip the model with an external field parameter $\xi\in \C^{V\times [q]}$.
We define for a graph $G=(V,E)$,
\begin{equation}\label{eq:graph hom def external}
  Z_{G}(A,\xi)=\sum_{\phi:V\to [q]}\prod_{v\in V}\xi_{v,\phi(v)}\cdot \prod_{uv\in E}A_{\phi(u),\phi(v)}.  
\end{equation}
Moreover, for a boundary condition $\sigma:\Lambda\to [q]$ on some set $\Lambda\subset V$ we denote by $Z^\sigma_G(A,\xi)$ the partition function defined as above where we only sum over those $\phi$ that restricted to $\Lambda$ coincide with $\sigma$.
The following lemma explains the usefulness of introducing the external field parameters.
\begin{lemma}\label{lem:ratio is derivative}
Let $A$ be a symmetric $q\times q$ matrix. Let $G=(V,E)$ be a graph, let $v\in V$ and $i\in [q]$ and let $\Lambda\subset V\setminus \{v\}$ be equipped with a boundary condition $\sigma: \Lambda\to [q]$.
Then
\[
P^\sigma_{G,v,i;A}(z)=\frac{\partial}{\partial \xi_{v,i}}\log(Z_{G}(J+z(A-J),\xi))|_{\xi=1}.
\]
\end{lemma}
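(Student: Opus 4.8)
The plan is to compute the logarithmic derivative on the right-hand side directly from the definition of $Z_G(A,\xi)$ and recognize the result as the ratio. First I would observe that $Z_G(J+z(A-J),\xi)$ is, for fixed $z$, a polynomial (in fact multilinear) in the variables $\xi_{u,j}$, since each coloring $\phi$ contributes the monomial $\prod_{u\in V}\xi_{u,\phi(u)}$ and each vertex receives exactly one color. Hence $\frac{\partial}{\partial\xi_{v,i}}Z_G(J+z(A-J),\xi)$ picks out exactly those colorings $\phi$ with $\phi(v)=i$, and upon setting $\xi=1$ afterwards it equals $Z^{\sigma'}_G(J+z(A-J))$ where $\sigma'$ is the boundary condition on $\{v\}$ assigning $i$ to $v$ (with no other constraints). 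Dividing by $Z_G(J+z(A-J),\xi)|_{\xi=1}=Z_G(J+z(A-J))$ gives the chain rule identity
\[
\frac{\partial}{\partial\xi_{v,i}}\log(Z_G(J+z(A-J),\xi))\Big|_{\xi=1}=\frac{Z^{\Phi(v)=i}_G(J+z(A-J))}{Z_G(J+z(A-J))}.
\]

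The remaining point is to incorporate the boundary condition $\sigma$ on $\Lambda$. The cleanest way is to note that specializing the external field is interchangeable with imposing a boundary condition: set $\xi_{u,j}=1$ if $u\notin\Lambda$, and for $u\in\Lambda$ set $\xi_{u,j}=\mathbf{1}[\sigma(u)=j]$. With this choice $Z_G(J+z(A-J),\xi)$ collapses to $Z^\sigma_G(J+z(A-J))$, because every coloring disagreeing with $\sigma$ on some vertex of $\Lambda$ contributes a factor $0$. Since the variable $\xi_{v,i}$ with $v\notin\Lambda$ is untouched by this specialization, the derivative identity above survives verbatim with $Z_G$ replaced by $Z^\sigma_G$ and the numerator becoming $Z^{\sigma_{v,i}}_G$; that is exactly $P^\sigma_{G,v,i;A}(z)$ by~\eqref{eq:ratio hom}. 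Strictly speaking one should take the $\xi_{v,i}$-derivative first, as an identity of rational functions in the remaining $\xi$ variables, and only then specialize the $\Lambda$-variables, so that one is differentiating a genuine function of $\xi_{v,i}$ rather than differentiating at a point where some other coordinates are already fixed; this is legitimate since differentiation in $\xi_{v,i}$ and specialization of $\xi_{u,j}$ for $u\neq v$ commute.

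I do not expect a serious obstacle here: the lemma is essentially a bookkeeping identity expressing that the partition function with external field is multilinear in the field and that $\frac{\partial}{\partial\xi_{v,i}}\log Z = \frac{1}{Z}\frac{\partial Z}{\partial\xi_{v,i}}$. The one place requiring a little care is making sure that $Z^\sigma_G(J+z(A-J),\xi)\neq 0$ near $\xi=1$ so that the logarithm is well defined in a neighborhood and the chain rule applies; for $z$ with $Z^\sigma_G(J+z(A-J))\neq 0$ this holds by continuity, and since we only ever evaluate at $\xi=1$ this suffices. The identity of rational functions in $z$ then follows from agreement on the open set where these non-vanishing conditions hold.
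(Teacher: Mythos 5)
Your proposal is correct and matches the paper's own (very short) argument: the partition function with external field is multilinear in $\xi$, so $\frac{\partial}{\partial \xi_{v,i}}$ picks out the colorings with $\phi(v)=i$, and the chain rule for the logarithm gives the ratio. You also rightly note that the $Z_G$ in the displayed statement must be read as $Z^\sigma_G$ (as in the paper's proof) for the identity to produce $P^\sigma_{G,v,i;A}$; whether one encodes $\sigma$ by restricting the sum or by specializing the $\Lambda$-field variables to indicators, as you do, is immaterial.
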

\begin{proof}
This follows directly from the fact that 
\[
\frac{\partial}{\partial \xi_{v,i}}Z^{\sigma}_{G}(J+z(A-J),\xi)|_{\xi=1}=Z^{\sigma_{v,i}}_{G}(J+z(A-J))
\]
and the standard rules for the derivative of the logarithm.
\end{proof}

Next we wish to use the cluster expansion to find a series expansion for $\log(Z^\sigma_{G}(J+z(A-J),\xi))$ with $\sigma$ a boundary condition on some set $\Lambda\subseteq V\setminus\{v\}$.
To do this we will have to realize the graph homomorphism partition function as the multivariate independence polynomial of an auxiliary graph $\Gamma$. This will be done in a similar way as in~\cite{sokalchrom,borgsetal}.

The vertex set of the auxiliary graph $\Gamma$ will consist of the connected subgraphs of $V(G)$ with at least one edge. Two vertices $H_1=(S_1,E_1)$ and $H_2=(S_2,E_2)$ of $\Gamma$ are connected by an edge if and only if $S_1$ and $S_2$ intersect.
Next we define the vertex weights. For a connected subgraph $H=(S,F)$ of $G$ we define the weight, $w^\sigma(H)$, of $H$ by
\begin{equation}\label{eq:weight graph hom}
w^\sigma(H):=\frac{z^{|F|}Z^{\sigma}_H(A-J,\xi)}{\left(\prod_{v\in S\setminus \Lambda}\sum_{i=1}^q\xi_{v,i}\right)\cdot \prod_{v\in \Lambda\cap S}\xi_{v,\sigma(v)}},
\end{equation}
where we understand $\sigma$ to be restricted to $\Lambda\cap S$.
We also define
\[
p^\sigma(\xi):=\left(\prod_{v\in V\setminus \Lambda}\sum_{i=1}^q\xi_{v,i}\right)\cdot \prod_{v\in \Lambda}\xi_{v,\sigma(v)}.
\]
\begin{lemma}\label{lem:hom is hard-core}
With definitions as above we have
\[
 p^\sigma(\xi)Z_{\Gamma}(w^\sigma)=Z^\sigma_{G}(J+z(A-J),\xi).
\]
\end{lemma}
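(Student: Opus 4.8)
The plan is to expand $Z^\sigma_G(J + z(A-J), \xi)$ directly from its definition \eqref{eq:graph hom def external} and show that the resulting sum organizes itself as $p^\sigma(\xi)$ times the multivariate independence polynomial $Z_\Gamma(w^\sigma)$. The key observation is that writing the edge weights as $J + z(A-J)$ means $A_{\psi(u),\psi(v)} = 1 + z(A-J)_{\psi(u),\psi(v)}$, so expanding the product $\prod_{uv \in E} (1 + z(A-J)_{\psi(u),\psi(v)})$ gives a sum over all subsets $F \subseteq E$ of $z^{|F|} \prod_{uv \in F}(A-J)_{\psi(u),\psi(v)}$. First I would perform this expansion inside the definition, obtaining
\[
Z^\sigma_G(J+z(A-J),\xi) = \sum_{\psi:V\to[q],\ \psi|_\Lambda = \sigma} \prod_{v\in V}\xi_{v,\psi(v)}\sum_{F\subseteq E} z^{|F|}\prod_{uv\in F}(A-J)_{\psi(u),\psi(v)},
\]
and then swap the order of the two sums so that $F$ ranges on the outside.

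Next I would decompose each subset $F \subseteq E$ into its connected components (as edge-subsets), say with vertex sets $S_1, \ldots, S_m$; note that an edge subset $F$ is the same data as an unordered collection of pairwise vertex-disjoint connected subgraphs $H_1 = (S_1, F_1), \ldots, H_m = (S_m, F_m)$ of $G$, each with at least one edge. This is exactly the index set of independent sets in $\Gamma$, since two connected subgraphs are non-adjacent in $\Gamma$ precisely when their vertex sets are disjoint. For a fixed such collection, the factor $\prod_{uv\in F}(A-J)_{\psi(u),\psi(v)}$ factors over the components, and the sum over colorings $\psi$ (consistent with $\sigma$ on $\Lambda$) also factors: a vertex $v$ lying in some $S_j \setminus \Lambda$ contributes $\sum_i \xi_{v,i}(A-J)$-dependent factors that are exactly $Z^\sigma_{H_j}(A-J,\xi)$ once we sum over the colors of the vertices in $S_j$; a vertex $v \in \Lambda \cap S_j$ is pinned to $\sigma(v)$ and contributes $\xi_{v,\sigma(v)}$; and a vertex $v$ lying in none of the $S_j$ contributes the free factor $\sum_i \xi_{v,i}$ if $v \notin \Lambda$, or $\xi_{v,\sigma(v)}$ if $v \in \Lambda$. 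Collecting the contributions of the vertices outside $\bigcup_j S_j$ gives precisely $p^\sigma(\xi)$ divided by the corresponding local factors already used up inside the $S_j$'s — so I would instead multiply and divide, writing each component's contribution as $p^\sigma(\xi)$-style local factors times $w^\sigma(H_j)$ as defined in \eqref{eq:weight graph hom}. Carefully bookkeeping this shows
\[
Z^\sigma_G(J+z(A-J),\xi) = p^\sigma(\xi)\sum_{\{H_1,\ldots,H_m\}\text{ ind. in }\Gamma}\ \prod_{j=1}^m w^\sigma(H_j) = p^\sigma(\xi)\,Z_\Gamma(w^\sigma).
\]

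The main obstacle is purely bookkeeping: making sure the denominator in the definition \eqref{eq:weight graph hom} of $w^\sigma(H)$ — namely $\big(\prod_{v\in S\setminus\Lambda}\sum_i \xi_{v,i}\big)\cdot\prod_{v\in\Lambda\cap S}\xi_{v,\sigma(v)}$ — cancels correctly against the local vertex factors, so that the leftover product over $V \setminus \bigcup_j S_j$ recombines with these denominators to give exactly $p^\sigma(\xi)$, independent of the chosen collection. The cleanest way to see this is to note that $p^\sigma(\xi)$ itself factors as a product over all $v \in V$ of a local factor $\ell_v(\xi)$ (equal to $\sum_i \xi_{v,i}$ if $v\notin\Lambda$ and $\xi_{v,\sigma(v)}$ if $v\in\Lambda$), and that $Z^\sigma_H(A-J,\xi) \big/ \prod_{v\in S}\ell_v(\xi) = w^\sigma(H)/z^{|F|}\cdot z^{|F|}$ — i.e., $w^\sigma(H)$ is built precisely so that $\prod_{v\in S}\ell_v(\xi)\cdot w^\sigma(H) = z^{|F|}Z^\sigma_H(A-J,\xi)$. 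Then the product of $\prod_{v\in S_j}\ell_v(\xi)$ over all components times $\prod_{v\notin\bigcup S_j}\ell_v(\xi)$ telescopes to $\prod_{v\in V}\ell_v(\xi) = p^\sigma(\xi)$, finishing the proof. I would also remark that the $z$-powers match automatically since $|F| = \sum_j |F_j|$, and that the case $F = \emptyset$ (the empty independent set in $\Gamma$) reproduces the term $p^\sigma(\xi)$ on the nose, which is a useful sanity check.
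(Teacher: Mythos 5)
Your proposal is correct and follows essentially the same route as the paper's proof: expand the edge product $\prod_{uv\in E}(1+z(A-J)_{\psi(u),\psi(v)})$ over subsets $F\subseteq E$, observe that the contribution of a fixed $F$ factors over its connected components and that each component contributes exactly $w^\sigma(H)$, and identify collections of pairwise vertex-disjoint connected subgraphs with independent sets of $\Gamma$. Your explicit bookkeeping via the local factors $\ell_v(\xi)$ is a slightly more detailed rendering of the cancellation the paper performs when it restricts the coloring sum to $V(F)$, but the argument is the same.
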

\begin{proof}
This follows from expanding the product over $E$ in the definition of $Z^\sigma_{G}(J+z(A-J),\xi)$. 
We have that  $(p^\sigma(\xi))^{-1}Z^\sigma_{G}(J+z(A-J),\xi)$ is equal to
 \\
\begin{align*}
&(p^\sigma(\xi))^{-1}\sum_{\substack{\phi:V\to [q]\\\phi|_{\Lambda}=\sigma}}\prod_{v\in V}\xi_{v,\phi(v)}\cdot \prod_{uv\in E}( J+z(A-J))_{\phi(u),\phi(v)}
    \\
    =&(p^\sigma(\xi))^{-1} \sum_{\substack{\phi:V\to [q]\\ \phi|_{\Lambda}=\sigma}}\prod_{v\in V}\xi_{v,\phi(v)}\cdot \sum_{F\subseteq E}z^{|F|}\prod_{uv\in F}(A-J)_{\phi(u),\phi(v)}
    \\
    =&\sum_{F\subseteq E} z^{|F|} \left(\prod_{v\in V(F)\setminus \Lambda}\sum_{i=1}^q\xi_{v,i}\right)^{-1}\cdot \left(\prod_{v\in \Lambda\cap V(F)}\xi_{v,\sigma(v)}\right)^{-1} \times 
    \\
    \quad \quad \quad \quad \quad  &\sum_{\substack{\phi:V(F)\to [q]\\\phi|_{\Lambda\cap V(F)}=\sigma|_{\Lambda\cap V(F)}}}\prod_{v\in V(F)}\xi_{\phi(v)}\cdot \prod_{uv\in F}(A-J)_{\phi(u),\phi(v)}.
\end{align*}
Now for $F\subseteq E$ fixed, the contribution to the sum is multiplicative over the connected components of $F$ and for such a component $H$ this contribution is exactly given by $w^\sigma(H)$.
This implies the statement of the lemma as the independent sets in $\Gamma$ are exactly the collections of pairwise vertex disjoint connected subgraphs with at least one edge of $G$.
\end{proof}

For a graph $G$, two positive integers $\ell,k$ and a vertex $v\in V(G)$ we define $\mathcal{C}_{v;\ell,k}(G)$ to be the collection consisting of sequences $(H_1,\ldots,H_k)$ of connected subgraphs of $G$ with at least two vertices satisfying
\begin{itemize}
    \item[(i)] $\sum_{j=1}^k |E(H_j)|=\ell$,
    \item[(ii)] $v\in \bigcup_{i=j}^k V(H_j)$,
    \item[(iii)] the graph $\Gamma(H_1,\ldots,H_k)$ is connected.
\end{itemize}
Let us denote the scaled weights $\widehat{w}^\sigma(H)=w^\sigma(H)z^{-|E(H)|}$ for any connected subgraph $H$ of $G$.
By applying the cluster expansion to $Z_\Gamma(w^\sigma)$ we obtain the following series expansion for the ratio:

\begin{lemma}\label{lem:series ratio hom}
As a series in $z$ we have that $P^\sigma_{G,v,i;A}(z)$ near $z=0$ is equal to 
\begin{equation}
    1/q+\sum_{\ell\geq 1}z^\ell\sum_{k\geq 1}\frac{1}{k!}\sum_{(H_1,\ldots,H_k)\in \mathcal{C}_{v;\ell,k}(G)}\phi(\Gamma(H_1,\ldots,H_k)) \frac{\partial}{\partial \xi_{v,i}}\prod_{j=1}^k w^\sigma(H_j)|_{\xi=1}.
\end{equation}
In particular, the $\ell$-th term of the series only depends on the distance at most $\ell$ neighbouhood of the vertex $v$ in $G$ (and the boundary condition $\sigma$ restricted to this neighbourhood).
\end{lemma}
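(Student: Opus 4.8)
The plan is to combine Lemma~\ref{lem:ratio is derivative}, Lemma~\ref{lem:hom is hard-core}, and the cluster expansion~\eqref{eq:cluster expansion} applied to the auxiliary graph $\Gamma$. First I would take logarithms in Lemma~\ref{lem:hom is hard-core} to obtain $\log Z^\sigma_G(J+z(A-J),\xi) = \log p^\sigma(\xi) + \log Z_\Gamma(w^\sigma)$, valid as a formal power series in $z$ near $z=0$ since every weight $w^\sigma(H)$ carries a factor $z^{|E(H)|}$ with $|E(H)|\geq 1$ (each vertex of $\Gamma$ is a connected subgraph with at least one edge), so all weights are small for small $z$ and the cluster expansion converges. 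Then I would apply $\frac{\partial}{\partial\xi_{v,i}}\big|_{\xi=1}$ using Lemma~\ref{lem:ratio is derivative}: the term $\log p^\sigma(\xi)$ contributes $\frac{\partial}{\partial\xi_{v,i}}\log p^\sigma(\xi)|_{\xi=1} = 1/k$ if $v\notin\Lambda$ (and $0$ if $v\in\Lambda$, but we assumed $v\notin\Lambda$), while differentiating the cluster expansion of $\log Z_\Gamma(w^\sigma)$ term by term gives
\[
\sum_{k\geq 1}\frac{1}{k!}\sum_{(H_1,\ldots,H_k)}\phi(\Gamma(H_1,\ldots,H_k))\,\frac{\partial}{\partial\xi_{v,i}}\prod_{j=1}^k w^\sigma(H_j)\Big|_{\xi=1},
\]
where the sum runs over all sequences of vertices of $\Gamma$, i.e. connected subgraphs $H_j$ of $G$ each with at least one edge.

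The next step is to identify which sequences contribute and to organize the sum by the power of $z$. Since $w^\sigma(H_j)$ contributes exactly $z^{|E(H_j)|}$, collecting the coefficient of $z^\ell$ restricts to sequences with $\sum_j |E(H_j)| = \ell$; this matches condition (i) of $\mathcal{C}_{v;\ell,k}(G)$. Condition (iii), connectivity of $\Gamma(H_1,\ldots,H_k)$, is forced because $\phi(\Gamma(H_1,\ldots,H_k))=0$ whenever the cluster is disconnected (as remarked after the definition of $\phi$). Condition (ii), $v\in\bigcup_j V(H_j)$, is forced because $\frac{\partial}{\partial\xi_{v,i}}\prod_j w^\sigma(H_j)|_{\xi=1}=0$ if $\xi_{v,i}$ does not appear in any factor — and $\xi_{v,i}$ appears in $w^\sigma(H_j)$ only through $Z^\sigma_{H_j}(A-J,\xi)$ or the normalizing denominator, both of which involve the variables $\xi_{v,\cdot}$ precisely when $v\in V(H_j)$. (One should also note that subgraphs with at least one edge and connected automatically have at least two vertices, matching the phrasing "at least two vertices" in the definition of $\mathcal{C}_{v;\ell,k}(G)$.) This yields exactly the claimed series, with the constant term $k^{-|V\setminus\Lambda|}$ coming from the $\ell=0$ part: the $\log p^\sigma$ contribution $1/k$ is the derivative, but the series must be read as starting from the evaluation $P^\sigma_{G,v,i;A}(0)$, which equals the uniform probability $k^{-1}\cdot\frac{k^{|V\setminus\Lambda|}}{k^{|V\setminus\Lambda|}}$... here I would just compute $P^\sigma_{G,v,i;A}(0)$ directly from~\eqref{eq:ratio hom}: at $z=0$ the matrix is $J$, so $Z^\tau_G(J,\xi)|_{\xi=1}=q^{|V\setminus\Lambda'|}$ for a boundary condition $\tau$ on $\Lambda'$, giving the ratio $q^{|V\setminus(\Lambda\cup\{v\})|}/q^{|V\setminus\Lambda|}=q^{-1}$; wait, I need to reconcile this with the stated $k^{-|V\setminus\Lambda|}$ — I would double-check the exponent convention in the paper, but regardless the constant term is computed by a direct evaluation at $z=0$, independent of the cluster machinery.

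For the final sentence — that the $\ell$-th term depends only on $B_G(v,\ell)$ and $\sigma$ restricted to it — I would argue as follows. In any contributing sequence $(H_1,\ldots,H_k)\in\mathcal{C}_{v;\ell,k}(G)$, the subgraphs $H_1,\ldots,H_k$ form a connected subgraph of $\Gamma$ containing (via condition (ii)) one with $v$ as a vertex; walking along edges of $\Gamma(H_1,\ldots,H_k)$, consecutive $H_j$'s share a vertex of $G$, so $\bigcup_j V(H_j)$ is connected in $G$ and contains $v$. Its total number of edges is $\ell$, hence its diameter is at most $\ell$, so every vertex it touches lies within graph distance $\ell$ of $v$, i.e. $\bigcup_j V(H_j)\subseteq B_G(v,\ell)$. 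Both the Ursell function $\phi(\Gamma(H_1,\ldots,H_k))$ and the derivative $\frac{\partial}{\partial\xi_{v,i}}\prod_j w^\sigma(H_j)|_{\xi=1}$ depend only on the induced subgraph $G[\bigcup_j V(H_j)]$ and on $\sigma$ restricted to $\Lambda\cap\bigcup_j V(H_j)\subseteq B_G(v,\ell)$, proving the locality claim.

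The main obstacle I anticipate is bookkeeping rather than conceptual: carefully justifying that differentiation of the cluster expansion can be performed term by term (which is fine at the level of formal power series in $z$, since for each fixed power of $z$ only finitely many terms contribute), and precisely matching the combinatorial data of "vertices of $\Gamma$" with "connected subgraphs of $G$ with $\geq 1$ edge" so that conditions (i)--(iii) of $\mathcal{C}_{v;\ell,k}(G)$ emerge exactly. A minor subtlety worth stating explicitly is why $\xi_{v,i}$ appears in $w^\sigma(H_j)$ iff $v\in V(H_j)$, and in the case $v\in V(H_j)\cap\Lambda$ one has a different (fixed) appearance — but since we assumed $v\notin\Lambda$, this case does not arise. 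I would also remark that the constant term's exponent should be taken to match whatever normalization the paper uses for $p^\sigma$, and verify it by the direct $z=0$ evaluation above.
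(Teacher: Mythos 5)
Your proposal is correct and follows essentially the same route as the paper: take logarithms in Lemma~\ref{lem:hom is hard-core}, differentiate via Lemma~\ref{lem:ratio is derivative}, and apply the cluster expansion to $Z_\Gamma(w^\sigma)$, with conditions (i)--(iii) of $\mathcal{C}_{v;\ell,k}(G)$ emerging exactly as you describe; you even supply the locality argument for the final sentence, which the paper leaves implicit. Your worry about the constant term is well founded: the correct value is $\frac{\partial}{\partial\xi_{v,i}}\log(p^\sigma(\xi))|_{\xi=1}=q^{-1}$ (confirmed by your direct evaluation of the ratio at $z=0$), and the stated $k^{-|V\setminus\Lambda|}$ is a typo for $k^{-1}$ with $k=q$.
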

\begin{proof}
By Lemma~\ref{lem:ratio is derivative} and the previous lemma it suffices to compute the partial derivative with respect to $\xi_{v,i}$ of $\log(p^\sigma(\xi))$ and $\log(Z_\Gamma(w^\sigma))$, evaluate the result at $\xi=1$ and add these.

It is not difficult to see that
\[
\frac{\partial}{\partial\xi_{v,i}}(\log(p^\sigma(\xi)))|_{\xi=1}=1/q.
\]

For the other derivative we first use~\eqref{eq:cluster expansion} to obtain that as a series in $z$, near $z=0$, whenever the $\xi_{u,j}$ are sufficently close to $1$, we have
\begin{align*}
\log(Z_\Gamma(w^\sigma))&=\sum_{k\geq 1}\frac{1}{k!}\sum_{H_1,\ldots,H_k\in V(\Gamma)} \phi(\Gamma(H_1,\ldots,H_k)) \prod_{i=1}^k
w^\sigma(H_i).
\\
&=\sum_{\ell\geq 1}z^\ell\sum_{k\geq 1}\frac{1}{k!}\sum_{\substack{H_1,\ldots,H_k\in V(\Gamma)\\ \sum_{i=1}^q |E(H_i)|=\ell}} \phi(\Gamma(H_1,\ldots,H_k)) \prod_{i=1}^k \widehat{w}^\sigma(H_i).
\end{align*}
Next observe that for a connected subgraph $H$ of $G$ we have $\frac{\partial}{\partial \xi_{v,i}}\widehat{w}^\sigma(H)=0$ if $v\notin V(H)$.
Therefore $\frac{\partial}{\partial \xi_{v,i}}(\log(Z_\Gamma(w^\sigma))|_{\xi=1}$ has the following series expansion in $z$ near $z=0:$
\[
\sum_{\ell\geq 1}z^\ell\sum_{k\geq 1}\frac{1}{k!}\sum_{(H_1,\ldots,H_k)\in \mathcal{C}_{v;\ell,k}(G)}\phi(\Gamma(H_1,\ldots,H_k)) \frac{\partial}{\partial \xi_{v,i}}\prod_{j=1}^k \widehat{w}^\sigma(H_j)|_{\xi=1}.
\]
This finishes the proof.
\end{proof}

\subsection{Bounded ratios imply strong spatial mixing}
\begin{lemma}\label{lem:bounded implies SSM hom}
Let $q\geq 2$ be an integer and let $A$ be a nonnegative and nonzero symmetric $q\times q$ matrix.
Let $\mathcal{G}$ be a family of graphs.
Suppose there exists constants $r>1$ and $M>0$ such that for all $z\in \mathbb{D}_{r}$, the ratios, $P^\sigma_{G,v,i;A}$, satisfy $|P^\sigma_{G,v,i;A}(z)|\leq M$ for all $G\in \mathcal{G}$, all vertices $v\in V(G)$ and all boundary conditions $\sigma:\Lambda\to[q]$ for $\Lambda\subset V\setminus\{v\}.$ 
Then the measure $\mu_A$ satisfies strong spatial mixing on $\mathcal{G}$ with exponential rate $r$.
\end{lemma}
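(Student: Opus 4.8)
The plan is to mimic exactly the structure of the proof of Lemma~\ref{lem:bounded implies SSM}, replacing the scaling by $\lambda$ with the interpolation variable $z$ built into the ratios $P^\sigma_{G,v,i;A}$. Fix $G=(V,E)\in\mathcal G$, a vertex $v$, a color $i\in[q]$, a set $\Lambda\subseteq V\setminus\{v\}$ and two boundary conditions $\sigma,\tau$ on $\Lambda$. Write $N:=d_G(v,\sigma\neq\tau)$. Consider the two holomorphic functions $P(z):=P^\sigma_{G,v,i;A}(z)$ and $Q(z):=P^\tau_{G,v,i;A}(z)$. By hypothesis both are bounded in absolute value by $M$ on the disk $\mathbb D_r$, and since $r>1$ the value at $z=1$ is well-defined and equals the conditional probability $\Pr_{\mu_A}[\mathrm\Psi(v)=i\mid\sigma]$ (resp.\ with $\tau$), by the remark following~\eqref{eq:ratio hom}.

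The key point is to compare the Taylor coefficients of $P$ and $Q$ at $z=0$. By Lemma~\ref{lem:series ratio hom} the constant term of each series is $k^{-|V\setminus\Lambda|}$ (careful: the paper's $k$ there is the number of colors, our $q$; I would just write $q^{-|V\setminus\Lambda|}$), which is the same for $\sigma$ and $\tau$ since $\Lambda$ is the same; and for $\ell\geq 1$ the coefficient of $z^\ell$ depends only on the distance-at-most-$\ell$ neighbourhood of $v$ in $G$ together with the restriction of the boundary condition to that neighbourhood. For $\ell\leq N-1$ this neighbourhood meets $\Lambda$ only in vertices where $\sigma$ and $\tau$ agree, so the coefficients of $z^\ell$ in $P$ and $Q$ coincide for all $\ell=0,1,\dots,N-1$. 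Hence $P$ and $Q$ satisfy the hypothesis of Lemma~\ref{lem:consequence Cauchy} (applied to $P-Q$, or to each separately and combined with the triangle inequality) with common leading agreement up to order $N-1$: choosing any $\rho$ with $1<\rho<r$ and using Cauchy's estimate $|a_k-b_k|\le 2M/\rho^{k+1}$, then letting $\rho\uparrow r$, gives
\[
\bigl|\Pr_{\mu_A}[\mathrm\Psi(v)=i\mid\sigma]-\Pr_{\mu_A}[\mathrm\Psi(v)=i\mid\tau]\bigr|=|P(1)-Q(1)|\le\sum_{k\ge N}|a_k-b_k|\le\frac{2M}{(r-1)r^{N}}.
\]
Taking $C:=2M/(r-1)$ yields~\eqref{eq:def SSM hom} with exponential rate $r$, which is the claim.

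Most of this is routine given the machinery already in place; the one genuine subtlety I would be careful about is the locality claim, i.e.\ that the coefficient of $z^\ell$ in $P^\sigma_{G,v,i;A}$ really only sees $B_G(v,\ell)$ and $\sigma|_{B_G(v,\ell)}$. This is asserted in Lemma~\ref{lem:series ratio hom}, but to invoke it cleanly I would restate why it suffices: a sequence $(H_1,\dots,H_k)\in\mathcal C_{v;\ell,k}(G)$ has $\Gamma(H_1,\dots,H_k)$ connected and $v\in\bigcup_j V(H_j)$ with $\sum_j|E(H_j)|=\ell$, so the union $\bigcup_j V(H_j)$ is a connected subset of $G$ containing $v$ with at most $\ell$ edges, hence contained in $B_G(v,\ell)$; moreover $w^\sigma(H_j)$ depends on $\sigma$ only through its restriction to $V(H_j)\cap\Lambda\subseteq B_G(v,\ell)\cap\Lambda$. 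Since $B_G(v,\ell)$ is an induced subgraph common to the problem, the identity of these coefficients for $\sigma$ and $\tau$ when $\ell\le N-1$ follows. (Note we do not even need $\mathcal G$ to be closed under induced subgraphs here, because the series expansion is over subgraphs of the fixed $G$ itself and both $\sigma$ and $\tau$ live on the same $\Lambda\subseteq V(G)$.) With that spelled out, the rest is just Lemma~\ref{lem:consequence Cauchy} and the triangle inequality.
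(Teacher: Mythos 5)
Your proposal is correct and follows essentially the same route as the paper: the paper likewise invokes Lemma~\ref{lem:series ratio hom} to conclude that the coefficients of $z^\ell$ agree for $\ell=0,\dots,d_G(v,\sigma\neq\tau)-1$ and then applies Lemma~\ref{lem:consequence Cauchy} to the difference $P^\sigma_{G,v,i;A}-P^\tau_{G,v,i;A}$, which is bounded by $2M$ on $\mathbb{D}_r$. Your extra unpacking of the locality claim (that a connected cluster with $\ell$ edges containing $v$ lives in $B_G(v,\ell)$) and your observation that the paper's $k^{-|V\setminus\Lambda|}$ should read $q^{-|V\setminus\Lambda|}$ are both accurate but not a departure from the paper's argument.
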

\begin{proof}
Let $G=(V,E)\in \mathcal{G}$, let $v\in V$ and let $\sigma$ and $\tau$ be two boundary conditions on some set $\Lambda\subset V\setminus \{v\}$.
Since by Lemma~\ref{lem:series ratio hom} for $\ell=0,\ldots d_G(\sigma\neq \tau)-1$ the coefficients of the series expansion around $z=0$ for $P^\sigma_{G,v,i;A}(z)$ and $P^\tau_{G,v,i;A}(z)$ are the same, the result follows from applying Lemma~\ref{lem:consequence Cauchy} to $P(z):=P^\sigma_{G,v,i;A}(z)-P^\tau_{G,v,i;A}(z)$.
\end{proof}

\subsubsection*{Algorithms}
Just as for the hard-core model, implicit in our proof of the above lemma there is an efficient algorithm for (approximately) computing the probabilities $\Pr_\mu[{\psi}(v)=i\mid \sigma]$ on graphs of maximum degree at most $\Delta$.
Again the basic idea is just to truncate the series for $P^\sigma_{G,v,i;A}$ at convenient depth, say $K$, so as to obtain the desired approximation.
This can be done efficiently (i.e. in time $\Delta^{O(K)}$) with an algorithm appearing in the proof of Theorem 6 from~\cite{TPR20}.
We leave the details to the reader.

\subsection{Bounded ratios from absence of zeros}
For a graph $G=(V,E)$ with a given orientation of the edges and $q\times q$ matrices $A^e$ for each edge $e=(u,v)$, we define 
\[
Z_{G}((A^e)_{e\in E})=\sum_{\psi:V\to [q]}\prod_{(u,v)\in E}A^{(u,v)}_{\psi(u),\psi(v)}.
\]
As before we have a similar definition for $Z^\sigma_{G}((A^e)_{e\in E})$ for a boundary condition $\sigma$ on some $\Lambda\subseteq V$.

We will need the following result due to Barvinok~\cite{BarvinokBook}:
\begin{theorem}[Theorem 7.1.4 of \cite{BarvinokBook}]\label{thm:zero-free hom barvinok}
Let $\Delta\geq 3$ and $q\geq 2$ be integers and let for some $\alpha=\alpha_\Delta<2\pi/3\Delta$,
\[
\delta_\Delta=\sin(\alpha/2)\cos(\alpha\Delta/2).
\]
let $A$ be any $q\times q$ matrix satisfying $|A_{i,j}-1|\leq \delta_\Delta$ for all $i,j=1,\ldots,q$.
Then for any orientation of any graph $G=(V,E)\in \mathcal{G}_\Delta$ and any boundary condition $\sigma$ on any $\Lambda\subseteq V$, $Z^\sigma_G(A)\neq 0$.
\end{theorem}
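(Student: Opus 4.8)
The statement is Theorem 7.1.4 of~\cite{BarvinokBook}; the plan is to prove it by induction through a vertex-peeling recursion, after enlarging the model with an external field so that the inductive statement becomes robust enough to propagate. Concretely, for a directed graph $G=(V,E)$, a tuple of weight vectors $\xi=(\xi_v)_{v\in V}$ with $\xi_v\in\C^q$, and a boundary condition $\sigma$ on $\Lambda\subseteq V$, set
\[
Z^\sigma_G(A,\xi)=\sum_{\substack{\psi:V\to[q]\\ \psi|_\Lambda=\sigma}}\ \prod_{v\in V\setminus\Lambda}\xi_{v,\psi(v)}\cdot\prod_{(u,v)\in E}A_{\psi(u),\psi(v)},
\]
so that the quantity in the theorem is $Z^\sigma_G(A,\mathbf 1)$. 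The target is the stronger assertion that $Z^\sigma_G(A,\xi)\ne 0$ --- indeed that a suitably normalised version of it lies in a fixed cone of half-angle $<\pi/2$ about the positive reals --- whenever every free vertex $v$ carries a weight vector $\xi_v$ lying inside a cone about $\mathbf 1$ whose half-angle is controlled by a per-vertex counter bounded by $\Delta$.

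First I would record the peeling recursion. Fixing a free vertex $v$ and summing over its colour gives
\[
Z^\sigma_G(A,\xi)=\sum_{i=1}^q \xi_{v,i}\,Z^\sigma_{G-v}\!\big(A,\xi^{(v\to i)}\big),
\]
where $G-v$ deletes $v$ together with its incident edges and $\xi^{(v\to i)}$ equals $\xi$ except that for each edge $(v,u)$ the vector $\xi_u$ is multiplied coordinatewise by the row $(A_{i,1},\dots,A_{i,q})$ and for each edge $(u,v)$ by the column $(A_{1,i},\dots,A_{q,i})^{\top}$. So pinning $v$ converts the edges at $v$ into an external-field perturbation of $\xi_u$ for $u\in N(v)$; and because $|A_{i,j}-1|\le\delta_\Delta\le\sin(\alpha/2)$, multiplication by such a row rotates each coordinate by at most $\arcsin(\delta_\Delta)\le\alpha/2$ and rescales it by a factor between $1-\delta_\Delta$ and $1+\delta_\Delta$. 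This is the only place the precise value $\delta_\Delta=\sin(\alpha/2)\cos(\alpha\Delta/2)$ enters: the cosine factor is what keeps the cumulative rescaling, after up to $\Delta$ such perturbations, bounded below by a positive constant.

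The inductive bookkeeping is the crux. To each free vertex $u$ I would attach a counter equal to the number of its neighbours that have already been peeled; this counter is $0$ initially, rises by one whenever a neighbour is deleted, and hence never exceeds $\deg_G(u)\le\Delta$ before $u$ itself is peeled. One maintains the invariant that $\xi_u$, normalised by its first coordinate, lies in the cone of half-angle $(\text{counter of }u)\cdot\alpha/2<\pi/3$ about $\R_{>0}$, with modulus bounded below by $(1-\delta_\Delta)^{\mathrm{counter}}$; the recursion preserves this, since one further peeled neighbour adds at most $\alpha/2$ of rotation and one factor $1-\delta_\Delta$ of shrinkage, and $\Delta\alpha/2<\pi/3$ by $\alpha<2\pi/3\Delta$. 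When $v$ is the vertex being peeled, the induction hypothesis applied to each $Z^\sigma_{G-v}(A,\xi^{(v\to i)})$ gives that it is nonzero with normalised argument confined to a fixed interval, while the $\xi_{v,i}/\xi_{v,1}$ contribute argument $<\pi/3$; choosing the normalisation so that these two budgets sum to less than $\pi/2$, every summand $\xi_{v,i}Z^\sigma_{G-v}(A,\xi^{(v\to i)})$ of the recursion lies in one common open half-plane, so $Z^\sigma_G(A,\xi)\ne0$, and one then checks its normalised argument is again in the required interval, closing the induction. Taking all $\xi_v=\mathbf 1$ (all counters zero) gives the theorem, the base cases $V=\emptyset$ and $V=\Lambda$ being immediate since each $A_{i,j}\ne0$.

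The main obstacle is exactly this coordination of constants: one must pin down the normalisation and the two cone half-angles so that the invariant is simultaneously (a) preserved by the recursion --- requiring that $\Delta$ successive coordinatewise perturbations, each of angular size $\le\alpha/2$ and contraction $\ge 1-\delta_\Delta$, leave a vector whose coordinates still point into a common open half-plane with a uniform positive lower bound on their moduli --- and (b) strong enough that the sum-over-colours step cannot vanish. Checking that $\alpha<2\pi/3\Delta$ supplies precisely the slack making both of these work (and that one then optimises $\alpha$ to maximise $\delta_\Delta$) is where the real work lies; the reduction to free vertices, the handling of both edge orientations, and the base cases are routine.
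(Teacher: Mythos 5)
First, note that the paper does not prove this statement: it is imported verbatim as Theorem 7.1.4 of Barvinok's book, and the remark following it explicitly omits the proof (pointing to Barvinok for the symmetric case and to \cite{chen2021spectral} for an analogous statement). So there is no in-paper argument to compare against; I can only assess your proposal on its own terms. Your overall architecture --- peel a free vertex, absorb its incident edge weights into an external field on its neighbours, and induct on the number of free vertices while charging each vertex for at most $\Delta$ such perturbations --- is indeed the right family of arguments and matches the structure of Barvinok's proof.

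The genuine gap is the inductive invariant. The absolute statement you propose to maintain, namely that $Z^\sigma_G(A,\xi)$ (after normalisation) lies in a fixed cone of half-angle $<\pi/2$ about $\R_{>0}$, is false and cannot be repaired by any normalisation depending only on the field entries: taking $A_{i,j}=1+\mathbf{i}\delta$ for all $i,j$ and $\xi=\mathbf 1$ gives $Z_G(A)=q^{|V|}(1+\mathbf{i}\delta)^{|E|}$, whose argument grows linearly in $|E|$ and leaves any fixed cone. Consequently the step ``the induction hypothesis gives that each $Z^\sigma_{G-v}(A,\xi^{(v\to i)})$ has normalised argument confined to a fixed interval, hence the $q$ summands lie in a common half-plane'' does not go through as stated. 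What must be controlled is \emph{relative}: the angle between two conditional partition functions whose data differ only at a single vertex (equivalently, between $Z^{\sigma_{v,i}}$ and $Z^{\sigma_{v,j}}$, or between $Z$ evaluated at two fields differing only on $N(v)$). The correct invariant is that this pairwise angle is at most $\alpha\Delta<2\pi/3$; the sum over colours is then nonzero by the geometric lemma that nonzero complex numbers with pairwise angles less than $2\pi/3$ have nonzero sum --- this is exactly where the constant $2\pi/3$ in the hypothesis $\alpha<2\pi/(3\Delta)$ enters, which your accounting (budgets of $\pi/3$ and $\pi/2$) does not reflect. Closing the induction then requires the quantitative companion lemma bounding the angle between $\sum_i u_i$ and $\sum_i u_i'$ in terms of the angles between $u_i$ and $u_i'$ and a lower bound on $|u_i|$; that is where the factor $\cos(\alpha\Delta/2)$ in $\delta_\Delta$ is consumed. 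Your per-vertex counters and the bound $\arcsin(\delta_\Delta)\le\alpha/2$ on the rotation of each field coordinate are the right raw material for establishing this pairwise bound, but as written they only control $\arg(\xi_{u,j})$, not $\arg Z$, and the missing transfer step (field perturbation $\Rightarrow$ bounded rotation of the partition function) is precisely the content of Barvinok's induction rather than a routine verification.
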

\begin{remark}
In fact this theorem is not stated as Theorem 7.1.4 in \cite{BarvinokBook}.
However, in his proof of Theorem 7.1.4 in~\cite{BarvinokBook}, Barvinok shows that the statement of the theorem is true for \emph{symmetric} matrices and ordinary graphs satisfying the condition. The extension to not necessarily symmetric matrices follows along exactly the same lines. See~\cite{chen2021spectral} for a proof of an analogues statement derived from~\cite{BarvinokSoberon}.   
We therefore omit a proof.
\end{remark}
\begin{remark}\label{rem:hom}
Barvinok has proven a stronger zero-freeness result for matrices whose entries are close to the real axis~\cite[Theorem 7.2.2]{BarvinokBook}. 
However it only applies to boundary conditions defined on \emph{connected} sets and this assumption is crucial in the proof. 
It would be interesting to see if the connectedness assumption can be removed somehow.
\end{remark}

The following result can be derived from the theorem above in combination with an idea from~\cite{chen2021spectral}.
\begin{lemma}\label{lem:bounded hom}
Let $\Delta\geq 3$ and $q\geq 2$ be integers and let for some $\alpha=\alpha_\Delta<2\pi/3\Delta$,
\[
\delta_\Delta:=\sin(\alpha/2)\cos(\alpha\Delta/2).
\]
Choose $\eta>0$, $\eps>0$ and let $A\in \mathbb{R}^{q\times q}$ be a symmetric matrix such that $|A_{i,j}-1| \leq \frac{\delta_\Delta}{ (1+\eps)(1+\eta)}$ for all $i,j=1,\ldots,q$.
Then for any connected graph $G=(V,E)\in \mathcal{G}_\Delta$, any vertex $v\in V$ and any $i\in [q]$ and any boundary condition $\sigma$ on $\Lambda\subset V\setminus \{v\}$
\[
|P^\sigma_{G,v,i;A}(z)|\leq 1/\eps.
\]
for all $z\in \mathbb{D}_{1+\eta}$.
\end{lemma}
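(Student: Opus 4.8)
The plan is to follow the same recipe that appeared for vertex-transitive graphs in Barvinok's book: express the ratio as a logarithmic derivative, and then interpret $P^\sigma_{G,v,i;A}$ as a quotient of two zero-free partition functions to which we can attach an external field, so that a product/contour argument controls its size on $\mathbb{D}_{1+\eta}$. Concretely, I would first write, using Lemma~\ref{lem:ratio is derivative},
\[
P^\sigma_{G,v,i;A}(z)=\frac{Z^{\sigma_{v,i}}_{G}(J+z(A-J))}{Z^{\sigma}_{G}(J+z(A-J))},
\]
so the task reduces to showing that the numerator is not too large relative to the denominator, uniformly over $z\in\mathbb{D}_{1+\eta}$, $G\in\mathcal{G}_\Delta$ connected, and all boundary conditions. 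The idea borrowed from \cite{chen2021spectral} is to compare $Z^{\sigma_{v,i}}_G$ with $Z^\sigma_G$ by introducing, at the single vertex $v$, a relaxed boundary condition: write $Z^\sigma_G=\sum_{j\in[q]}Z^{\sigma_{v,j}}_G$, so that $P^\sigma_{G,v,i;A}(z)=Z^{\sigma_{v,i}}_G/\sum_j Z^{\sigma_{v,j}}_G$, and it suffices to bound each ratio $Z^{\sigma_{v,i}}_G/Z^{\sigma_{v,j}}_G$.

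The key step is to realize $Z^{\sigma_{v,i}}_G/Z^{\sigma_{v,j}}_G$ as a ratio of zero-free quantities that extends holomorphically past $z=1$. Here is where the slack factor $(1+\eps)^\Delta(1+\eta)$ enters: for $z\in\mathbb{D}_{1+\eta}$ the matrix $B:=J+z(A-J)$ has entries satisfying $|B_{k,\ell}-1|=|z|\,|A_{k,\ell}-1|\le (1+\eta)\cdot\frac{\delta_\Delta}{(1+\eps)^\Delta(1+\eta)}=\frac{\delta_\Delta}{(1+\eps)^\Delta}<\delta_\Delta$, so Theorem~\ref{thm:zero-free hom barvinok} already gives $Z^{\sigma}_G(B)\neq0$ and $Z^{\sigma_{v,j}}_G(B)\neq0$ for all $j$, on the whole disk $\mathbb{D}_{1+\eta}$. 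But the extra factor $(1+\eps)^\Delta$ gives more: it allows us to replace the edge weights at the edges incident to $v$ by a different (still admissible) matrix. Specifically, on the (at most $\Delta$) edges incident to $v$, consider scaling the deviation $A-J$ by a factor $t$ with $|t|\le 1+\eps$; because $|z(1+\eps)|\,|A_{k,\ell}-1|\le (1+\eta)(1+\eps)\cdot\frac{\delta_\Delta}{(1+\eps)^\Delta(1+\eta)}=\frac{\delta_\Delta}{(1+\eps)^{\Delta-1}}\le\delta_\Delta$, Barvinok's theorem still applies to this modified (edge-weighted) partition function $Z^{\sigma_{v,j}}_G$. Fixing the color $j=\sigma_{v,j}(v)$ at $v$, the partition function $Z^{\sigma_{v,j}}_G$ with the incident edges scaled by $t$ is an entire function of $t$ of degree at most $\Delta$ (it is multilinear in the $\le\Delta$ incident-edge weights), nonvanishing on $|t|\le 1+\eps$; hence the ratio $f(t):=Z^{\sigma_{v,i}}_G(t)/Z^{\sigma_{v,j}}_G(t)$ — where now both numerator and denominator use the same scaled incident edges — is holomorphic on $|t|<1+\eps$, and at $t=0$ equals $Z^{\sigma_{v,i}}_{G\setminus v}\cdot\text{(const)}/Z^{\sigma_{v,j}}_{G\setminus v}\cdot\text{(const)}$... more simply, at $t=0$ the edges incident to $v$ carry weight $J$, which decouples $v$, so $Z^{\sigma_{v,i}}_G(0)=Z^{\sigma_{v,j}}_G(0)$ and $f(0)=1$. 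Since $f$ maps $|t|\le 1$ into... this is where the quantitative bound is extracted: we want $|f(1)|\le 1/\eps$.

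The cleanest way to turn $f(0)=1$ and zero-freeness of numerator and denominator on $|t|\le1+\eps$ into $|f(1)|\le 1/\eps$ is the following elementary observation about ratios of low-degree zero-free polynomials. Since $g(t):=Z^{\sigma_{v,j}}_G(t)$ is a polynomial in $t$ of degree $d\le\Delta$ with no zeros in $|t|\le 1+\eps$, and $h(t):=Z^{\sigma_{v,i}}_G(t)$ likewise, and $g(0)=h(0)$, we factor $h(t)=h(0)\prod(1-t/\rho_k)$ and $g(t)=g(0)\prod(1-t/\mu_k)$ with $|\rho_k|,|\mu_k|>1+\eps$; then
\[
|f(1)|=\prod_k\frac{|1-1/\rho_k|}{|1-1/\mu_k'|}\cdots
\]
is not directly bounded by this alone, so instead I would argue via the maximum principle applied to $1/g$: actually the right tool is that $h/g$ is holomorphic and nonzero-free-denominator on $\mathbb{D}_{1+\eps}$, and one bounds $|h/g|$ on $|t|=1$ by combining a lower bound on $|g|$ (from no zeros, via the minimum modulus principle: $\min_{|t|\le1}|g|\ge |g(0)|(\eps/(1+\eps))^{d}$ type estimate) with an upper bound on $|h|$ (from no zeros plus $h(0)=g(0)$, Schwarz–Pick / Borel–Carathéodory on the disk $\mathbb{D}_{1+\eps}$). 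I expect the main obstacle to be getting these two one-sided modulus bounds to combine into exactly the clean constant $1/\eps$ with the degree-$\Delta$ count absorbed correctly — this is precisely the role of the exponent $\Delta$ in $(1+\eps)^\Delta$, and the computation in \cite[proof around Lemma 2.2.1]{BarvinokBook} together with the idea of \cite{chen2021spectral} should make it go through; once $|f(1)|=|P^\sigma_{G,v,i;A}(1)|\le 1/\eps$ is established the same argument at a general $z$ (rescaling the disk) yields the bound for all $z\in\mathbb{D}_{1+\eta}$.
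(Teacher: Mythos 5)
Your proposal assembles the right ingredients --- Barvinok's zero-freeness theorem, the observation that the slack factor $(1+\eps)^{\Delta}$ leaves room to perturb the weights on the edges incident to $v$, and the external-field idea from \cite{chen2021spectral} --- but it does not close, and both routes you sketch for the final quantitative step fail. First, the reduction ``it suffices to bound each ratio $Z^{\sigma_{v,i}}_G/Z^{\sigma_{v,j}}_G$'' is invalid for complex $z$: the denominator of $P^\sigma_{G,v,i;A}(z)$ is the \emph{sum} $\sum_j Z^{\sigma_{v,j}}_G$, and even if every summand is comparable in modulus to the numerator, the sum can be far smaller than each term because of cancellation; zero-freeness of the sum gives no quantitative lower bound by itself. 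Second, the fallback via minimum-modulus / Schwarz--Pick estimates for the degree-$\le\Delta$ polynomials $h(t),g(t)$ cannot produce the clean bound $1/\eps$: two polynomials of degree $\Delta$ that are zero-free on $\overline{\mathbb{D}_{1+\eps}}$ and agree at $t=0$ can have $|h(1)/g(1)|$ as large as $((1+\eps)/\eps)^{\Delta}$ (take $g(t)=(1-t/(1+2\eps))^{\Delta}$ and $h(t)=(1+t/(1+2\eps))^{\Delta}$), so any bound extracted from root locations alone is exponential in $\Delta$, not $1/\eps$. You correctly flag this obstacle but do not resolve it.

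The paper's argument is a proof by contradiction that sidesteps both problems. Suppose $|P|>1/\eps$ where $P:=P^\sigma_{G,v,i;A}(z)$. Then $\xi_{v,i}:=1-1/P$ lies in $B(1,\eps)$. Absorb this single external field into the edges incident to $v$ by replacing, on each oriented edge $e=(u,w)$, the entry $1+z(A_{k,\ell}-1)$ by $B^e_{k,\ell}=1+z(A_{k,\ell}-1)\xi_{u,k}^{1/\deg(u)}\xi_{w,\ell}^{1/\deg(w)}$, with all other $\xi$'s set to $1$. The hypothesis $|A_{k,\ell}-1|\le \delta_\Delta/((1+\eps)^{\Delta}(1+\eta))$ together with $|z|<1+\eta$ and $|\xi_{v,i}|^{1/\deg(v)}\le 1+\eps$ guarantees $|B^e_{k,\ell}-1|\le\delta_\Delta$, so Theorem~\ref{thm:zero-free hom barvinok} (in its edge-dependent, not-necessarily-symmetric form) gives $Z^\sigma_G((B^e)_{e\in E})\neq 0$. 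On the other hand, summing over the colour of $v$ shows this modified partition function equals $Z^{\sigma}_G(J+z(A-J))-\tfrac{1}{P}\,Z^{\sigma_{v,i}}_G(J+z(A-J))$, which vanishes by the definition of $P$. This contradiction is exactly where the clean constant $1/\eps$ comes from: all that is needed is that $1-1/P$ stay within distance $\eps$ of $1$, which is precisely the negation of the claimed bound. No lower bound on the denominator and no analysis of the roots of $h$ and $g$ is required.
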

\begin{proof}
We argue by contradiction. Suppose that for some $z\in \mathbb{D}_{1+\eta}$, $P:=P^\sigma_{G,v,i;A}(z)$ satisfies $|P|>1/\eps$. 

Recall the definition of the partition function with boundary conditions $\xi\in \C^{V\times [q]}$ \eqref{eq:graph hom def external}.
Orient the edges of $G$. 
For $\xi_{u,j}\in B(1,\eps)$ for each $u\in V$ and $j\in [q]$, to be determined later, define for each edge $e=(u,w)$ a matrix $B^e$ by
\[
B^e_{i,j}=1+z(A_{i,j}-1)\cdot \xi_{u,i}^{1/\deg(u)}\xi_{w,j}^{1/\deg(w)}
\]
for $i,j=1,\ldots,q$.
Now we set $\xi_{u,j}=1$ unless $u=v$ and $j=i$ in which case we set $\xi_{v,i}=1-1/P\in B(1,\eps)$.
By construction, the matrices $B^e$ satisfy the condition of Theorem~\ref{thm:zero-free hom barvinok}
and hence
\[
Z^\sigma_{G}((B^e)_{e\in E}))\neq 0. 
\]
However, expanding the sum over all possible colors of the vertex $v$, we get
\[
Z^\sigma_{G}((B^e)_{e\in E}))=\sum_{j=1}^qZ^{\sigma_{v,j}}_G(J+z(A-J))-1/P Z^{\sigma_{v,i}}_G(J+z(A-J)=0,
\]
by definition of the ratio $P$. This is clearly a contradiction and finishes the proof.
\end{proof}

The proof of Theorem~\ref{thm:main hom} now follows quickly. 
\begin{proof}[Proof of Theorem~\ref{thm:main hom}]
Using Lemma~\ref{lem:bounded hom} combined with Lemma~\ref{lem:bounded implies SSM hom} the desired result is immediate.
\end{proof}

\begin{remark}
Using Theorem~\ref{thm:montel} (Montel's theorem) it is possible to prove a version of Lemma~\ref{lem:bounded hom} only requiring univariate zero-freeness as opposed to the possibly stronger notion of multivariate zero-freeness.
A sufficient condition would for example be that the numerator and denominator in the definition of the ratio are nonzero as well as that their difference is nonzero, so that the ratio avoids the points $0,1$ and $\infty$. 
(Various variations are possible since Theorem~\ref{thm:montel} (Montel's theorem) is quite flexible to use.)
Conceivably this could lead to better bounds for specific matrices $A$, but we are not aware of any concrete examples.
\end{remark}

\section{Concluding remarks}\label{sec:conclude}
As mentioned in the introduction our approach is quite robust and is applicable to many other models as well. The two examples that were covered in the previous sections essentially suggest a recipe for proving strong spatial mixing from absence of complex zeros. Roughly the steps are as follows.
\begin{enumerate}
\item Express the conditional probability as a rational function and bound this function using knowledge about absence of complex zeros of the partition function (with boundary conditions) either using Theorem~\ref{thm:montel} (Montel's theorem) or a variant of Lemma~\ref{lem:bounded hom}.
    \item Express the partition function of the model as the multivariate independence polynomial of an auxiliary graph with suitable weights.
    \item Use the cluster expansion to obtain a combinatorial interpretation of the coefficients of the series expansion of the rational function and show that the $k$th coefficient depends only on the depth $O(k)$ neighbourhood of the root vertex.
\end{enumerate}

It would be very interesting to know if strong spatial mixing with exponential rate implies absence of zeros in some qualitative sense. 
We expect some version of this implication to be true, but for now we refrain from making any bold conjectures.
Instead, we state a concrete question for the independence polynomial, but the question is equally interesting for other models as well.

\begin{question}
Let $\mathcal{G}$ be an infinite family of bounded degree graphs. Suppose there exist constants $r>1$ and $\lambda^\star>0$ such that the hard-core measure at any $\lambda\in (0,\lambda^\star]$ satisfies strong spatial mixing with exponential rate $r$ on $\mathcal{G}$. Does there exist an open set $U\subset \mathbb{C}$ containing $[0,\lambda^\star]$ such that for all $G\in \mathcal{G}$ and $\lambda\in U$, $Z_G(\lambda)\neq 0$?
\end{question}

Another interesting question can be found when looking at colorings of trees:
\begin{question}
Consider the $q\times q$ matrix $J-I$, where $I$ denotes the identity matrix. The partition function $Z_G(J-I)$, is equal to the number of proper $q$-colorings of the graph $G$.
It was recently shown that $\mu_{(J-I)}$ satisfies strong spatial mixing on the collection of all trees of maximum degree at most $\Delta$ provided $q\geq 1.59 \Delta$~\cite{SSMcoloringtree}.
It is however only known that there exists some $\eps>0$ such that $Z_T(J-I+zI)\neq 0$ for all $z$ in an $\eps$-neighbourhood of the unit interval and all trees of maximum degree at most $\Delta$ \emph{with} boundary conditions for $q\geq 2\Delta$~\cite{LSScorrelation}. 
Can the constant $2$ be replaced by $1.59$? So as to match the strong spatial mixing result.
\end{question}

\section*{Acknowledgement}
I thank Will Perkins for stimulating and insightful discussions. I moreover thank David Gamarnik and Tyler Helmuth and two anonymous referees for spotting some inaccuracies in an earlier version of the paper and for some useful suggestions. 
\small{
\bibliography{zeros}

\begin{thebibliography}{10}

\bibitem{Anarietal}
Yeganeh Alimohammadi, Nima Anari, Kirankumar Shiragur, and Thuy-Duong Vuong.
\newblock Fractionally log-concave and sector-stable polynomials: counting
  planar matchings and more.
\newblock In {\em Proceedings of the 53rd Annual ACM SIGACT Symposium on Theory
  of Computing}, pages 433--446, 2021.

\bibitem{BandyopadhyayGamarnik}
Antar Bandyopadhyay and David Gamarnik.
\newblock Counting without sampling. {N}ew algorithms for enumeration problems
  using statistical physics.
\newblock In {\em Proceedings of the {S}eventeenth {A}nnual {ACM}-{SIAM}
  {S}ymposium on {D}iscrete {A}lgorithms}, pages 890--899. ACM, New York, 2006.

\bibitem{BarvinokBook}
Alexander Barvinok.
\newblock {\em Combinatorics and complexity of partition functions}, volume~30
  of {\em Algorithms and Combinatorics}.
\newblock Springer, Cham, 2016.

\bibitem{Barvinokreal}
Alexander Barvinok.
\newblock Approximating real-rooted and stable polynomials, with combinatorial
  applications.
\newblock {\em Online J. Anal. Comb.}, (14):13, 2019.

\bibitem{BarvinokSoberon}
Alexander Barvinok and Pablo Sober\'{o}n.
\newblock Computing the partition function for graph homomorphisms.
\newblock {\em Combinatorica}, 37(4):633--650, 2017.

\bibitem{Bayatietalmatchings}
Mohsen Bayati, David Gamarnik, Dimitriy Katz, Chandra Nair, and Prasad Tetali.
\newblock Simple deterministic approximation algorithms for counting matchings.
\newblock In {\em S{TOC}'07---{P}roceedings of the 39th {A}nnual {ACM}
  {S}ymposium on {T}heory of {C}omputing}, pages 122--127. ACM, New York, 2007.

\bibitem{Bencs18}
Ferenc Bencs.
\newblock Christoffel-{D}arboux type identities for the independence
  polynomial.
\newblock {\em Combin. Probab. Comput.}, 27(5):716--724, 2018.

\bibitem{bencs2020some}
Ferenc Bencs, P\'{e}ter Csikv\'{a}ri, and Guus Regts.
\newblock Some applications of {W}agner's weighted subgraph counting
  polynomial.
\newblock {\em Electron. J. Combin.}, 28(4):Paper No. 4.14, 21, 2021.

\bibitem{BGGS}
Ivona Bez\'{a}kov\'{a}, Andreas Galanis, Leslie~Ann Goldberg, and Daniel
  \v{S}tefankovi\v{c}.
\newblock Inapproximability of the independent set polynomial in the complex
  plane.
\newblock {\em SIAM J. Comput.}, 49(5):STOC18--395--STOC18--448, 2020.

\bibitem{borgsetal}
Christian Borgs, Jennifer Chayes, Jeff Kahn, and L\'{a}szl\'{o} Lov\'{a}sz.
\newblock Left and right convergence of graphs with bounded degree.
\newblock {\em Random Structures Algorithms}, 42(1):1--28, 2013.

\bibitem{buys}
Pjotr Buys.
\newblock Cayley {T}rees do {N}ot {D}etermine the {M}aximal {Z}ero-{F}ree
  {L}ocus of the {I}ndependence {P}olynomial.
\newblock {\em Michigan Math. J.}, 70(3):635--648, 2021.

\bibitem{complexdynamicsbook}
Lennart Carleson and Theodore~W. Gamelin.
\newblock {\em Complex dynamics}.
\newblock Universitext: Tracts in Mathematics. Springer-Verlag, New York, 1993.

\bibitem{chen2021spectral}
Zongchen Chen, Kuikui Liu, and Eric Vigoda.
\newblock Spectral independence via stability and applications to {H}olant-type
  problems.
\newblock In {\em 2021 {IEEE} 62nd {A}nnual {S}ymposium on {F}oundations of
  {C}omputer {S}cience---{FOCS} 2021}, pages 149--160. IEEE Computer Soc., Los
  Alamitos, CA, [2022] \copyright 2022.

\bibitem{ChudnovskySeymour07}
Maria Chudnovsky and Paul Seymour.
\newblock The roots of the independence polynomial of a clawfree graph.
\newblock {\em J. Combin. Theory Ser. B}, 97(3):350--357, 2007.

\bibitem{chaoticratios}
David de~Boer, Pjotr Buys, Lorenzo Guerini, Han Peters, and Guus Regts.
\newblock Zeros, chaotic ratios and the computational complexity of
  approximating the independence polynomial.
\newblock {\em arXiv preprint arXiv:2104.11615}, 2021.

\bibitem{DS84}
R.~L. Dobrushin and S.~B. Shlosman.
\newblock Completely analytical {G}ibbs fields.
\newblock In {\em Statistical physics and dynamical systems ({K}\"{o}szeg,
  1984)}, volume~10 of {\em Progr. Phys.}, pages 371--403. Birkh\"{a}user
  Boston, Boston, MA, 1985.

\bibitem{DS87}
R.~L. Dobrushin and S.~B. Shlosman.
\newblock Completely analytical interactions: constructive description.
\newblock {\em J. Statist. Phys.}, 46(5-6):983--1014, 1987.

\bibitem{SSMcoloringtree}
Charilaos Efthymiou, Andreas Galanis, Thomas~P. Hayes, Daniel
  \v{S}tefankovi\v{c}, and Eric Vigoda.
\newblock Improved strong spatial mixing for colorings on trees.
\newblock In {\em Approximation, randomization, and combinatorial optimization.
  {A}lgorithms and techniques}, volume 145 of {\em LIPIcs. Leibniz Int. Proc.
  Inform.}, pages Art. No. 48, 16. Schloss Dagstuhl. Leibniz-Zent. Inform.,
  Wadern, 2019.

\bibitem{FriedliVelenik}
S.~Friedli and Y.~Velenik.
\newblock {\em Statistical mechanics of lattice systems}.
\newblock Cambridge University Press, Cambridge, 2018.
\newblock A concrete mathematical introduction.

\bibitem{gamarnik2020correlation}
David Gamarnik.
\newblock Correlation decay and the absence of zeros property of partition
  functions.
\newblock {\em Random Structures Algorithms}, 62(1):155--180, 2023.

\bibitem{TPR20}
Tyler Helmuth, Will Perkins, and Guus Regts.
\newblock Algorithmic {P}irogov-{S}inai theory.
\newblock {\em Probab. Theory Related Fields}, 176(3-4):851--895, 2020.

\bibitem{jain2021approximate}
Vishesh Jain, Will Perkins, Ashwin Sah, and Mehtaab Sawhney.
\newblock Approximate counting and sampling via local central limit theorems.
\newblock In {\em Proceedings of the 54th Annual ACM SIGACT Symposium on Theory
  of Computing}, pages 1473--1486, 2022.

\bibitem{KP86}
R.~Koteck\'{y} and D.~Preiss.
\newblock Cluster expansion for abstract polymer models.
\newblock {\em Comm. Math. Phys.}, 103(3):491--498, 1986.

\bibitem{LPRS}
J.~L. Lebowitz, B.~Pittel, D.~Ruelle, and E.~R. Speer.
\newblock Central limit theorems, {L}ee-{Y}ang zeros, and graph-counting
  polynomials.
\newblock {\em J. Combin. Theory Ser. A}, 141:147--183, 2016.

\bibitem{edge}
Chengyu Lin, Jingcheng Liu, and Pinyan Lu.
\newblock A simple {FPTAS} for counting edge covers.
\newblock In {\em Proceedings of the {T}wenty-{F}ifth {A}nnual {ACM}-{SIAM}
  {S}ymposium on {D}iscrete {A}lgorithms}, pages 341--348. ACM, New York, 2014.

\bibitem{weighted}
Jingcheng Liu, Pinyan Lu, and Chihao Zhang.
\newblock F{PTAS} for counting weighted edge covers.
\newblock In {\em Algorithms---{ESA} 2014}, volume 8737 of {\em Lecture Notes
  in Comput. Sci.}, pages 654--665. Springer, Heidelberg, 2014.

\bibitem{LSSFisherzeros}
Jingcheng Liu, Alistair Sinclair, and Piyush Srivastava.
\newblock Fisher zeros and correlation decay in the {I}sing model.
\newblock {\em J. Math. Phys.}, 60(10):103304, 12, 2019.

\bibitem{LSScorrelation}
Jingcheng Liu, Alistair Sinclair, and Piyush Srivastava.
\newblock Correlation decay and partition function zeros: Algorithms and phase
  transitions.
\newblock {\em SIAM Journal on Computing}, (0):FOCS19--200--FOCS19--252, 2022.

\bibitem{lu2013improved}
Pinyan Lu and Yitong Yin.
\newblock Improved fptas for multi-spin systems.
\newblock In {\em Approximation, Randomization, and Combinatorial Optimization.
  Algorithms and Techniques}, pages 639--654. Springer, 2013.

\bibitem{MS2}
Marcus Michelen and Julian Sahasrabudhe.
\newblock Central limit theorems and the geometry of polynomials.
\newblock {\em arXiv preprint arXiv:1908.09020}, 2019.

\bibitem{MS1}
Marcus Michelen and Julian Sahasrabudhe.
\newblock Central limit theorems from the roots of probability generating
  functions.
\newblock {\em Adv. Math.}, 358:106840, 27, 2019.

\bibitem{Milnor}
John Milnor.
\newblock {\em Dynamics in one complex variable}, volume 160 of {\em Annals of
  Mathematics Studies}.
\newblock Princeton University Press, Princeton, NJ, third edition, 2006.

\bibitem{PatelRegts17}
Viresh Patel and Guus Regts.
\newblock Deterministic polynomial-time approximation algorithms for partition
  functions and graph polynomials.
\newblock {\em SIAM J. Comput.}, 46(6):1893--1919, 2017.

\bibitem{PetersRegts19}
Han Peters and Guus Regts.
\newblock On a conjecture of {S}okal concerning roots of the independence
  polynomial.
\newblock {\em Michigan Math. J.}, 68(1):33--55, 2019.

\bibitem{ScottSokal05}
Alexander~D. Scott and Alan~D. Sokal.
\newblock The repulsive lattice gas, the independent-set polynomial, and the
  {L}ov\'{a}sz local lemma.
\newblock {\em J. Stat. Phys.}, 118(5-6):1151--1261, 2005.

\bibitem{shao2019contraction}
Shuai Shao and Yuxin Sun.
\newblock Contraction: a unified perspective of correlation decay and
  zero-freeness of 2-spin systems.
\newblock {\em Journal of Statistical Physics}, 185, 2021.

\bibitem{shearer}
J.~B. Shearer.
\newblock On a problem of {S}pencer.
\newblock {\em Combinatorica}, 5(3):241--245, 1985.

\bibitem{sokalchrom}
Alan~D. Sokal.
\newblock Bounds on the complex zeros of (di)chromatic polynomials and
  {P}otts-model partition functions.
\newblock {\em Combin. Probab. Comput.}, 10(1):41--77, 2001.

\bibitem{Weitz06}
Dror Weitz.
\newblock Counting independent sets up to the tree threshold.
\newblock In {\em S{TOC}'06: {P}roceedings of the 38th {A}nnual {ACM}
  {S}ymposium on {T}heory of {C}omputing}, pages 140--149. ACM, New York, 2006.

\bibitem{LeeYang1}
C.~N. Yang and T.~D. Lee.
\newblock Statistical theory of equations of state and phase transitions. {I}.
  {T}heory of condensation.
\newblock {\em Phys. Rev. (2)}, 87:404--409, 1952.

\bibitem{zalcman}
Lawrence Zalcman.
\newblock Normal families: new perspectives.
\newblock {\em Bull. Amer. Math. Soc. (N.S.)}, 35(3):215--230, 1998.

\end{thebibliography}
\bibliographystyle{plain}
}

\end{document}